\definecolor{red}{rgb}{1,0,0}
\definecolor{blue}{rgb}{0,0,1}
\definecolor{green}{rgb}{0,.6,0}
\newtheorem{thm}{Theorem}[section]
\newtheorem{cor}[thm]{Corollary}
\newtheorem{lem}[thm]{Lemma}
\newtheorem{prop}[thm]{Proposition}
\newtheorem{conj}[thm]{Conjecture}
\newtheorem{obs}[thm]{Observation}
\theoremstyle{definition}
\newtheorem{rem}[thm]{Remark}
\theoremstyle{definition}
\newtheorem{defn}[thm]{Definition}
\theoremstyle{definition}
\newcommand{\bit}{\begin{itemize}}
\newcommand{\eit}{\end{itemize}}
\newcommand{\ben}{\begin{enumerate}}
\newcommand{\een}{\end{enumerate}}
\newcommand{\beq}{\begin{equation}}
\newcommand{\eeq}{\end{equation}}
\newcommand{\bea}{\begin{eqnarray}} 
\newcommand{\eea}{\end{eqnarray}}
\newcommand{\bpf}{\begin{proof}}
\newcommand{\epf}{\end{proof}\ms}
\newcommand{\bmt}{\begin{bmatrix}}
\newcommand{\emt}{\end{bmatrix}}
\newcommand{\ms}{\medskip}
\newcommand{\noi}{\noindent}
\newcommand{\beqs}{\begin{equation*}} 
\newcommand{\eeqs}{\end{equation*}}
\newcommand{\beas}{\begin{eqnarray*}}
\newcommand{\eeas}{\end{eqnarray*}}
\title{On the zero forcing number of the complement of graphs with forbidden subgraphs}
\author{Emelie Curl
\thanks{Department of Mathematics, Statistics, and Computer Science, Hollins University, Roanoke, VA, USA (curlej@hollins.edu)}
\and Shaun Fallat
\thanks{Department of Mathematics and Statistics, University of Regina, Regina, Saskatchewan, CA (shaun.fallat@uregina.ca)}
\and Ryan Moruzzi Jr
\thanks{Department of Mathematics, California State University East Bay, Hayward, CA, USA (ryan.moruzzi@csueastbay.edu)}~\thanks{Corresponding Author}
\and Carolyn Reinhart
\thanks{Department of Mathematics and Statistics, Swarthmore College, Swarthmore, PA, USA (creinha1@swarthmore.edu)}
\and Derek Young
\thanks{Department of Mathematics and Statistics, Mount Holyoke College, South Hadley, MA, USA (dyoung@mtholyoke.edu)}}
\date{\today}
\begin{document}

\maketitle

\begin{abstract}
Motivated in part by an observation that the zero forcing number for the complement of a tree on $n$ vertices is either $n-3$ or $n-1$ in one exceptional case, we consider the zero forcing number for the complement of more general graphs under some conditions, particularly those that do not contain complete bipartite subgraphs. We also move well beyond trees and completely study all of the possible zero forcing numbers for the complements of unicyclic graphs and cactus graphs.
\end{abstract}

\noi {\bf Keywords} 
Zero forcing number, complements, bipartite graphs, trees, unicyclic graphs, cactus graphs

\noi{\bf AMS subject classification} 05C50, 05C76.

\section{Introduction}
Zero forcing is an iterative graph coloring process where an initial set of blue vertices eventually color all vertices of a graph using a color change rule. The zero forcing number of a graph was first defined in 2008 by the AIM Minimum Rank Special Graphs Work Group, as a method for bounding the maximum nullity of a graph \cite{AIM}. In \cite{AIM}, the authors determine the zero forcing number of the complement of a tree by proving that the zero forcing number is equal to the maximum nullity for the complement of a tree. However, this proof does not directly involve properties of zero forcing. Since 2008, zero forcing has been a topic of much study, and has grown in to a graph parameter of independent interest (for example, see \cite{B-zf1, B-zf2, b-q, fh, h,l,r}). In this paper, we determine the zero forcing number of the complements of graphs through direct zero forcing arguments, including a new proof of the zero forcing number of the complement of a tree which does not involve maximum nullity. This result follows from a more general result, which gives a lower bound for the zero forcing number of graphs in terms of the smallest complete bipartite graph which they do not contain as a subgraph.

Let $G$ be a graph on $n$ vertices with vertex set $V(G)$ and edge set $E(G)$. The set of vertices adjacent to a vertex $v\in V(G)$ is called the neighborhood of $v$, denoted $N_G(v)$. The \emph{complement} of a graph $\overline{G}$  is the graph with vertex set $V(G)$ and edge set $E(\overline{G})=\{v_iv_j \, : v_i,v_j\in V(G) \, \text{and} \, v_iv_j\not\in E(G)\}$. An \emph{independent set} is a set of vertices such that no two vertices in the set are adjacent. The \emph{complete bipartite graph} $K_{r,s}$ is the graph on $r+s$ vertices such that $V(G)=A\cup B$, where $A$ and $B$ are independent sets of size $r$ and $s$, respectively, and $E(G)=\{v_iv_j : v_i\in A \, \text{and} \, v_j\in B\}$. Suppose $H$ is an induced subgraph of a graph $G$. Then it follows easily from the definitions of graph complements and induced subgraphs that $\overline{H}$ is an induced subgraph of $\overline{G}$.

In the zero forcing process, denote an initial set of blue vertices $B\subset V(G)$ and let the remaining vertices of the graph be white. The \emph{color change rule} states that a blue vertex $b\in B$ can \emph{force} a white vertex $w$ to become blue if $w$ is the only white neighbor of $b$. A \emph{zero forcing set} is any initial set of blue vertices in a graph such that repeated application of the color change rule results in every vertex of the graph being blue. The \emph{zero forcing number} of a graph $G$, denoted $\operatorname{Z}(G)$ is the minimum size of a zero forcing set. A \emph{minimum zero forcing set} of a graph $G$ is a zero forcing set $B$ such that $|B|=\operatorname{Z}(G)$. 

In each application of the color change rule, we assume all possible forces occur simultaneously. Note that for each white vertex which is forced, there may be multiple blue vertices which could have forced it. However, we can assume in this case that one of the blue vertices was chosen to force the white vertex. A \emph{forcing chain} is a sequence of $k$ vertices in the graph $(v_1,v_2,\dots, v_k)$ such that during the zero forcing process, $v_{i}$ forces $v_{i+1}$ for all $1\leq i\leq k-1$ and $1\leq k\leq n$. Each zero forcing set and each choice of forces during the applications of the color change rule results in a set of forcing chains. Note that the number of chains is equal to the size of the zero forcing set and the chains are all disconnected. The \emph{reversal} of a forcing chain $(v_1,v_2,\dots, v_k)$ is the sequence of vertices $(v_k,v_{k-1},\dots, v_1)$. It was shown in \cite{HHKMWY12} that the set of forcing chain reversals are themselves a set of zero forcing chains for the same graph, with the last vertices in each zero forcing chain forming a new zero forcing set of the same size.

The zero forcing number of the complement of a graph has previously been studied in \cite{EKY15}. In this paper, the authors show that the zero forcing number of a graph on $n$ vertices is at most $n-3$ as long as the complement of the graph is connected. They also bound the sum of the zero forcing numbers of a graph and it complement in terms of $n$ and the minimum degree.

In this paper, we begin in Section \ref{sec:KeyResult} by providing a lower bound on the zero forcing number of the complement of a graph which does not contain $K_{r,s}$ as a subgraph. 
In Section \ref{sec:uni}, the zero forcing number of the complement of unicyclic graphs on $n$ vertices is determined to be $n-2$, $n-3$ or $n-4$ and we provide a complete characterization of when each of these values is achieved. In Section \ref{sec:MultiCycle}, we turn our attention to the complement of graphs known as cactus graphs. Finally, in Section \ref{sec:Future}, we discuss a possible extension of this work to the study of the zero forcing number of the complement of partial $k$-trees.  

\section{Graphs which do not contain a $K_{r,s}$ subgraph}\label{sec:KeyResult}
In this section, we consider graphs which do not contain a complete bipartite graph $K_{r,s}$ as a subgraph. We first consider the most general case, and prove a bound on the zero forcing number of the complement, in terms of $r$ and $s$. The following theorem is a main result and will be applied to prove further results throughout the paper.



\begin{thm}\label{thm:SubKrs}
Let $G$ be a graph on $n$ vertices which does not contain $K_{r,s}$ as a subgraph for some $r+s\leq n$. Then $\operatorname{Z}(\overline{G})\geq n-r-s+1$.
\end{thm}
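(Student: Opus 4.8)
The plan is to argue by contradiction: suppose $\operatorname{Z}(\overline{G}) \le n-r-s$, and derive the existence of a $K_{r,s}$ subgraph in $G$. So let $B$ be a zero forcing set of $\overline{G}$ with $|B| \le n-r-s$, and consider the set $W = V(G) \setminus B$ of initially white vertices, which has $|W| \ge r+s$. The first move of the forcing process in $\overline{G}$ forces some white vertex $w$; this means some blue vertex $b$ has $w$ as its unique white neighbor in $\overline{G}$. Equivalently, in $G$, the vertex $b$ is adjacent (in $G$) to every vertex of $W$ except $w$ — that is, $b$ is $G$-adjacent to all of $W \setminus \{w\}$.

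The key observation is that this must remain true throughout the process in a controlled way. More precisely, I would track the white set $W$ and note that at the very first step, every vertex that performs a force is $G$-adjacent to all of $W$ except the one vertex it forces. Better: I would look at the structure right before the first force. Partition $W$ according to who forces whom, but the cleanest route is this: consider the last step at which the white set still has at least $r+s$ vertices (or reason about the first batch of forces). I expect the intended argument is to show that among the $|W| \ge r+s$ initially white vertices, we can find a subset $S$ of size $s$ and, using the blue vertices that force into $S$, a subset $R$ of size $r$ with all edges present in $G$. The natural candidate for $R$ is a set of $r$ vertices lying on distinct forcing chains (so they are blue "early" or are the forcing vertices), each adjacent in $G$ to a large common set of white vertices.

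Concretely, here is the mechanism I would use. Since $|W| \ge r+s$, pick any $s$ vertices $w_1,\dots,w_s \in W$ to be the intended "$S$ side". During the forcing process in $\overline{G}$, each $w_j$ eventually gets forced, but I want them forced "last" among a chosen collection. Reverse the forcing process: by the reversal result cited in the excerpt, there is a zero forcing set $B'$ of the same size $\le n-r-s$ consisting of the terminal vertices of the chains, and running the chains in reverse. Now choose $S \subseteq W$ to be $s$ vertices that are initially white in the \emph{reversed} process as well (i.e., not terminal vertices of chains); since there are $n - |B| \ge r+s$ white vertices and only $|B| \le n-r-s \le n-s$ of them can be chain-terminals... this counting needs care, so instead I would directly pick $S$ to be any $s$ white vertices and let $R$ be built from the forces. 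The honest plan: at the moment just before $w_1$ (say the first of the $w_j$ to be forced) turns blue, its forcing neighbor $b_1$ is $G$-adjacent to all currently-white vertices except $w_1$; in particular $b_1$ is $G$-adjacent to $w_2,\dots,w_s$ and to all other still-white vertices. Iterating, I extract vertices $b_1,\dots,b_r$ (the forcers of a well-chosen set of white vertices, or a mix of forcers and early-blue vertices) that are simultaneously $G$-adjacent to a common set of $s$ white vertices, yielding $K_{r,s} \subseteq G$.

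The main obstacle is exactly this extraction step: ensuring that one can simultaneously produce $r$ vertices on one side and $s$ on the other with \emph{all} $rs$ edges present in $G$, since a single forcer only controls the white vertices at its own time step, and white vertices disappear as the process runs. I expect the resolution is to use the reversal symmetry to "freeze" a set of $s$ vertices that stay white until near the end together with the $r$ vertices forcing them, or to induct on $r+s$ by peeling off one force at a time: after the first force in $\overline{G}$, the vertex $w$ together with its forcer $b$ can be absorbed, reducing to a graph on $n-1$ vertices that (one checks) has no $K_{r,s-1}$ (or $K_{r-1,s}$) subgraph on the relevant induced set, closing the induction. I would develop the induction on $r+s$, base case $r+s$ small (e.g. $s=1$, where "no $K_{r,1}$ subgraph" forces low degree in $G$, hence high degree in $\overline G$, directly bounding $\operatorname{Z}(\overline G)$ from below), as the likely clean path.
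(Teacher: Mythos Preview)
Your setup matches the paper exactly: argue by contradiction, take a zero forcing set $B$ of $\overline{G}$ with $|B|\le n-r-s$, set $W=V\setminus B$ with $|W|\ge r+s$, and observe that whenever a blue vertex $v$ forces a white vertex $w$ in $\overline{G}$, the vertex $v$ must be $G$-adjacent to every currently white vertex other than $w$. You also correctly identify the extraction of the bipartite subgraph as the one remaining task.

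Where you stall is precisely the step the paper handles in one stroke, and it requires neither reversal nor induction. Simply relabel the white vertices as $w_1,\dots,w_{r+s}$ \emph{in the order in which they are forced} (processing one force at a time), and let $v_i$ be the vertex that forces $w_i$. Then $v_i$ is $G$-adjacent to all of $\{w_{i+1},\dots,w_{r+s}\}$; the vertices $v_1,\dots,v_r$ are pairwise distinct (a vertex can force at most once); and each $v_i$ with $i\le r$ is either in $B$ or equals some $w_j$ with $j<i\le r$, hence is distinct from every $w_{r+1},\dots,w_{r+s}$. Therefore $\{v_1,\dots,v_r\}$ and $\{w_{r+1},\dots,w_{r+s}\}$ span a $K_{r,s}$ subgraph of $G$, the desired contradiction. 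Your difficulty came from fixing the $S$-side in advance as an arbitrary set of $s$ white vertices; the resolution is to let the forcing order dictate both sides, pairing the \emph{first} $r$ forcers against the \emph{last} $s$ white vertices, which guarantees every required $G$-edge is present.
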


\begin{proof}
Suppose that $\operatorname{Z}(\overline{G})\leq n-r-s$ and let $B$ be a zero forcing set of size $n-r-s$ for $\overline{G}$. Let $W=V-B$, the set of initially white vertices and let $W=\{w_1,\dots,w_{r+s}\}$. There must exist a vertex $v_1\in B$ whose only white neighbor in $\overline{G}$ lies in $W$, or else no forcing occurs. Let $v_1$ force $w_1$. Then in $G$, $v_1$ is adjacent to $\{w_2,\dots,w_{r+s}\}$. In order for forcing to continue, there exists $v_2\in B$ whose only remaining white neighbor in $\overline{G}$ lies in $W-w_1$. Note it is possible that $v_2=w_1$, but $v_2$ cannot be $v_1$ or $\{w_2,\dots,w_{r+s}\}$. Let the vertex which $v_2$ forces be $w_2$. Then $v_2$ must be adjacent in $G$ to $\{w_3,\dots,w_{r+s}\}$ in $G$. Continuing in this manner, we can find vertices ${v_1,\dots,v_{r+s}}$ such that $v_i$ forces $w_i$ in $\overline{G}$ for $1\leq i\leq r+s$. Further, for $1\leq i\leq r+s-1$, $v_i$ is adjacent to $\{w_{i+1},\dots,w_{r+s}\}$ in $G$. Note in each case it is possible that $v_i=w_{j}$ for some $1\leq j\leq i-1$, but $v_i$ cannot be $v_k$ for $2\leq k\leq i-1$ or $w_j$ for $i\leq j\leq r+s$. Therefore the vertices $\{v_1,\dots,v_r\}$ and $\{w_{r+1},\dots,w_{r+s}\}$ contain $K_{r,s}$ as a subgraph.
\end{proof}

\begin{figure}[h!]
    \centering
    \includegraphics[scale=.25]{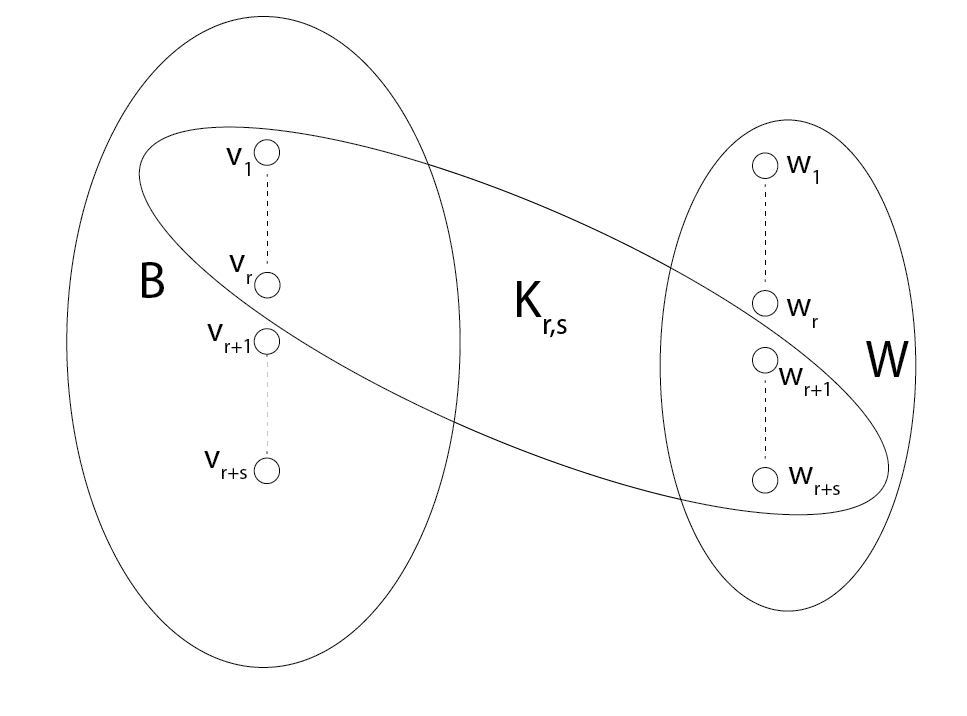}
    \caption{The graph $G$ and the $K_{r,s}$ subgraph found in the proof of Theorem \ref{thm:SubKrs}}
    \label{fig:KrsSubgraph}
\end{figure}

We will now apply Theorem \ref{thm:SubKrs} to prove results about the zero forcing number of the complement of various other graph families. First, we turn our attention to trees. Since trees do not contain any cycles, they do not contain $K_{2,2}$ as a subgraph. As mentioned in the introduction, one of the numerous contributions from the work in  \cite{AIM} was a determination of the zero forcing number of the complement of a tree. However, the method of proof given in \cite{AIM} did not rely on the rules of zero forcing. Instead, the proof relied on first establishing the maximum nullity for the complement of a tree, and then noting equality between the zero forcing number and the maximum nullity must also hold in this extreme case. We will now re-prove this result, using Theorem \ref{thm:SubKrs} and the properties of zero forcing.

\begin{prop}\label{treesandstars}
Let $T$ be a tree on $n \geq 4$ vertices which is not the star graph. Then $\operatorname{Z}(\overline{T})=n-3$. For the star graph $K_{1,n-1}$, with $n \geq 3$, $\operatorname{Z}(\overline{K_{1,n-1}})=n-1$.
\end{prop}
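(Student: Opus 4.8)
The plan is to handle the star and the non-star cases separately, using Theorem~\ref{thm:SubKrs} for the lower bound in the non-star case and a direct zero forcing argument for the matching upper bound.

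First I would dispose of the star. For $K_{1,n-1}$ with center $c$ and leaves $\ell_1,\dots,\ell_{n-1}$, the complement $\overline{K_{1,n-1}}$ is the disjoint union of the isolated vertex $c$ together with the complete graph $K_{n-1}$ on the leaves. Since $\operatorname{Z}$ is additive over connected components, $\operatorname{Z}(\overline{K_{1,n-1}}) = \operatorname{Z}(K_1) + \operatorname{Z}(K_{n-1}) = 1 + (n-2) = n-1$, using the standard fact $\operatorname{Z}(K_m)=m-1$.

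Next, for a tree $T$ on $n\ge 4$ vertices that is not a star: since $T$ is acyclic it contains no $K_{2,2}$ subgraph, so Theorem~\ref{thm:SubKrs} with $r=s=2$ gives $\operatorname{Z}(\overline{T})\ge n-4+1 = n-3$. The work is the upper bound $\operatorname{Z}(\overline{T})\le n-3$, for which I would exhibit an explicit zero forcing set of size $n-3$ in $\overline{T}$. Because $T$ is a non-star tree, its diameter is at least $3$, so $T$ contains a path $P = a - b - c - d$ on four distinct vertices; I would try to take $B = V(\overline{T}) \setminus \{b,c,d\}$ (equivalently $W=\{b,c,d\}$) as the candidate forcing set, and argue it forces. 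In $\overline{T}$, the vertex $a$ is adjacent to $c$ and to $d$ (since $ac,ad\notin E(T)$) but not to $b$; more usefully, I would look for a blue vertex with a unique white neighbor. A natural first force: pick a leaf $\ell$ of $T$ at the far end of a longest path, with support vertex $u$; in $\overline{T}$, $\ell$ is adjacent to everything except $u$, so if all of $W$ except one vertex is outside $N_T(\ell)\cup\{\ell\}$, that blue leaf can force. After one or two forces the white set shrinks and the remaining blue set, being co-large, can typically finish because in $\overline{T}$ most pairs of vertices are adjacent. The cleanest route is probably: show that after removing three carefully chosen vertices (two adjacent leaves sharing no common structure, or the two endpoints and one internal vertex of a longest path), the induced behaviour of $\overline{T}$ forces step by step down the chain.

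The main obstacle will be making the upper-bound forcing argument uniform across all non-star trees: the structure of $\overline{T}$ near low-degree vs.\ high-degree vertices of $T$ differs, and one must choose the three white vertices so that a valid first force always exists and the cascade does not stall. I expect the key case distinction to be whether $T$ has a vertex of large degree (a "near-star") or is more path-like; in the path-like case a longest path $P_4$ gives the forcing set directly, while in the near-star case one removes vertices around the high-degree vertex so that its many non-neighbors-in-$T$ (which are neighbors in $\overline{T}$) supply the forces. Verifying that in every case $W$ can be chosen of size exactly $3$ with the forcing going through — and reconciling this with the $K_{2,2}$-free lower bound so that equality holds — is the crux; once a valid $W$ is identified the individual forces are routine to check.
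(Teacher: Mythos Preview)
Your overall strategy matches the paper's: handle the star by direct computation, get the lower bound $n-3$ from Theorem~\ref{thm:SubKrs} with $r=s=2$, and exhibit a size-$(n-3)$ zero forcing set in $\overline{T}$ using an induced $P_4$ (which exists since a non-star tree has diameter at least $3$).

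The gap is in your choice of white set. With $W=\{b,c,d\}$ (three \emph{consecutive} vertices of the path $a\text{--}b\text{--}c\text{--}d$), no first force is possible: a blue vertex $x$ has a unique white $\overline{T}$-neighbour in $W$ if and only if $x$ is adjacent in $T$ to exactly two of $b,c,d$, and in a tree that would create a cycle with the edge $bc$ or $cd$ (or the path $b\text{--}c\text{--}d$). So the process stalls immediately, and the leaf-based workaround you sketch cannot help either, since a leaf $\ell$ with support $u$ is $\overline{T}$-adjacent to all of $W\setminus\{u\}$, hence to at least two white vertices.

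The fix is exactly the option you mention in passing and then set aside: take $W$ to be the two endpoints and one internal vertex of the $P_4$, say $W=\{a,b,d\}$. Then $c$ is blue and, being a $T$-neighbour of both $b$ and $d$ but not of $a$, has $a$ as its unique white $\overline{T}$-neighbour; so $c$ forces $a$. Next $a$, being a $T$-neighbour of $b$ but not of $d$, forces $d$; finally $b$ is forced (e.g.\ by $d$). This argument is uniform over all non-star trees---no near-star versus path-like case split is needed, and no requirement that $a$ be a leaf. This is precisely what the paper does (with the white set $\{v_0,v_1,v_3\}$ in its notation).
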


\begin{proof}
First observe, that for the star graph $K_{1,n-1}$, the complement $\overline{K_{1,n-1}}$ is $K_{n-1}\cup K_{1}$. Since for any $r \geq 2$ $\operatorname{Z}(K_r)=r-1$, it is clear $\operatorname{Z}(\overline{K_{1,n-1}})=n-2+1=n-1$.
 Let $T$ be a tree that is not a star. 
Since trees do not contain $K_{2,2}$, $\operatorname{Z}(\overline{T})\geq n-3$ by Theorem \ref{thm:SubKrs}. To show equality, we describe a zero forcing set $B$ for $\overline{T}$ of size $n-3$. Since $T$ is not a star is must have diameter at least 3. Let $v_0$ be a leaf in $T$. Then there exists an induced path on four distinct vertices in $T$ on the vertices $\{v_0, v_1, v_2, v_3\}$. Define $B=V(T) \setminus \{v_0, v_1, v_2\}$. Since $v_2$ is in $B$ and is adjacent to $v_0$ in  $\overline{T}$ and not adjacent to either $v_1$ nor $v_3$, it follows that $v_2$ can force $v_0$. Then $v_0$ can now force $v_3$, as $v_0$ is not adjacent to $v_1$ in $\overline{T}$. Finally, $v_1$ can be forced. Thus $B$ is a zero forcing set of size $n-3$.
\end{proof}

Noting that trees are a special class of bipartite graphs, Proposition \ref{treesandstars} can be generalized to all $K_{2,2}$-free bipartite graphs which are not the star graph. In order to prove the generalization, we require the following known lemma.

\begin{lem}\label{InducedSubgraphs}\cite{AIM}
For a graph $G$ on at least 3 vertices, $\operatorname{Z}(G)\ge |G|-2$ if and only if $G$ does not contain $P_2\cup P_2 \cup P_2$, $P_3\cup P_2$, $P_4$, $\ltimes$, or a dart as an induced subgraph. 
\end{lem}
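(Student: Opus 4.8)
\medskip
\noindent\textbf{Proof proposal.} The plan is to prove the equivalent statement that $\operatorname{Z}(G) \le |G| - 3$ if and only if $G$ contains one of $P_2 \cup P_2 \cup P_2$, $P_3 \cup P_2$, $P_4$, $\ltimes$, or the dart as an induced subgraph, treating the two implications separately. The ``if'' direction is the easy one. First I would record the elementary fact that if $H$ is an induced subgraph of $G$ then $\operatorname{Z}(G) \le \operatorname{Z}(H) + (|G|-|H|)$, since a minimum zero forcing set of $H$ together with all vertices outside $H$ is a zero forcing set of $G$: every vertex of $H$ sees exactly the same white neighborhood in $G$ as in $H$ throughout the process, because everything outside $H$ starts blue and stays blue. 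Then one checks directly that each of the five listed graphs $H$ satisfies $\operatorname{Z}(H) = |H| - 3$; this is immediate for $P_4$, $P_3 \cup P_2$ and $P_2 \cup P_2 \cup P_2$, and for the two five-vertex graphs $\ltimes$ and the dart it is enough to exhibit a zero forcing set of size $2$. Combining these gives $\operatorname{Z}(G) \le |G| - 3$.

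For the ``only if'' direction, suppose $\operatorname{Z}(G) \le |G| - 3$. Enlarging a minimum zero forcing set if necessary, $G$ has a zero forcing set $B$ with exactly three white vertices $w_1, w_2, w_3$. Fix a choice of forces and consider the resulting forcing chains. Each forcing chain is an \emph{induced} path in $G$: once $v_i$ forces $v_{i+1}$, every earlier vertex of the chain is already blue and hence cannot be adjacent to a strictly later (still white) vertex of the chain. The chains partition $V(G)$ into $|G|-3$ induced paths, so their lengths $\ell_1 \ge \ell_2 \ge \cdots$ satisfy $\sum_i(\ell_i - 1) = 3$; thus the nontrivial chains realize the partition $(3)$, $(2,1)$, or $(1,1,1)$ of $3$. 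If it is $(3)$, the long chain is an induced $P_4$ and we are done. If it is $(2,1)$ or $(1,1,1)$, I would pass to the induced subgraph $H$ of $G$ on the five, respectively six, vertices lying on the nontrivial chains. In the first case we already know an induced $P_3$ among these five vertices together with an edge on the remaining two; in the second case we know a perfect matching on the six vertices. The goal is to show that $H$, or some induced subgraph of it, is forced to be one of the five listed graphs --- or to contain an induced $P_4$, which returns us to the previous case.

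The crucial extra ingredient is that the color change rule constrains which \emph{other} edges may appear among these few vertices: whenever a blue vertex performs a force it has no white neighbor besides the one it forces, and since all white vertices lie in $\{w_1, w_2, w_3\}$ this yields a list of forced non-adjacencies that depends on the order in which $w_1, w_2, w_3$ turn blue. With these timing constraints imposed, one runs a finite (if somewhat tedious) case analysis over the remaining possible edges and verifies that in every case $H$ contains one of $P_4$, $P_3 \cup P_2$, $P_2 \cup P_2 \cup P_2$, $\ltimes$, or the dart as an induced subgraph. I expect the bookkeeping in this case check --- especially the six-vertex case $(1,1,1)$, which has the most configurations --- to be the main obstacle. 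It is worth emphasizing that the timing constraints are genuinely needed: there are six-vertex graphs with a perfect matching, and five-vertex graphs with an induced $P_3$ and a vertex-disjoint edge, that contain none of the five forbidden subgraphs, and these are precisely the configurations the color change rule excludes.
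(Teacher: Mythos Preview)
The paper does not prove this lemma: it is quoted from \cite{AIM} and stated without proof, then invoked as a black box in the argument for the proposition that follows. So there is no proof in the paper against which to compare your proposal.

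Your outline is nonetheless the natural one and matches how the result is established in the literature. The ``if'' direction is complete and correct, via the standard monotonicity $\operatorname{Z}(G)\le \operatorname{Z}(H)+|G|-|H|$ for induced subgraphs $H$. For the ``only if'' direction, your reduction through forcing chains to the three shape types $(3)$, $(2,1)$, $(1,1,1)$ is right, and your justification that each chain is an induced path is correct. What remains is exactly what you flag: the finite case analysis on the five- or six-vertex induced subgraph carried by the nontrivial chains, after imposing the non-adjacencies forced by the timing of the color change rule. You are also right that those timing constraints are essential --- without them there are configurations (e.g.\ a six-vertex graph with a perfect matching and a few extra edges) containing none of the five forbidden graphs --- so the analysis cannot be shortcut. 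This bookkeeping is genuinely routine and does go through, but as written your proposal is a correct plan rather than a complete proof; to finish, the $(2,1)$ and especially the $(1,1,1)$ cases need to be written out explicitly.
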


\begin{prop}
If $G$ is a connected $K_{2,2}$-free bipartite graph with $|G| = n\geq 4$ vertices and $G\ne K_{1,n-1}$, then $\operatorname{Z}(\overline{G}) = n-3  $.
\end{prop}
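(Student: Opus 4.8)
The plan is to mirror the structure of Proposition \ref{treesandstars}: obtain the lower bound $\operatorname{Z}(\overline{G}) \geq n-3$ for free from Theorem \ref{thm:SubKrs} (since $G$ is $K_{2,2}$-free, it contains no $K_{2,2}$ subgraph, and with $r=s=2$ we get $\operatorname{Z}(\overline{G}) \geq n-4+1 = n-3$), and then establish the matching upper bound $\operatorname{Z}(\overline{G}) \leq n-3$. For the upper bound I would invoke Lemma \ref{InducedSubgraphs}: it suffices to exhibit one of the five induced subgraphs $P_2\cup P_2\cup P_2$, $P_3\cup P_2$, $P_4$, $\ltimes$, or a dart inside $\overline{G}$, since the presence of any of them forces $\operatorname{Z}(\overline{G}) \leq n-3$. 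Equivalently, since $\overline{H}$ is an induced subgraph of $\overline{G}$ whenever $H$ is an induced subgraph of $G$, it suffices to find one of the \emph{complements} of those five graphs as an induced subgraph of $G$ itself.

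The heart of the argument is a structural dichotomy on $G$. First I would use that $G$ is connected, bipartite, $K_{2,2}$-free, on $n\geq 4$ vertices, and is not a star. Because $G$ is connected and not a star, it has diameter at least $3$, so $G$ contains an induced path $P_4$ on vertices $v_0v_1v_2v_3$ (take a longest shortest path; a length-$3$ geodesic is induced). The complement of $P_4$ is again $P_4$, so $\overline{G}$ contains an induced $P_4$, and Lemma \ref{InducedSubgraphs} immediately gives $\operatorname{Z}(\overline{G}) \geq n-2$? — no: Lemma \ref{InducedSubgraphs} says $\operatorname{Z}(G)\geq |G|-2$ \emph{iff} $G$ avoids all five; so finding an induced $P_4$ in $\overline G$ shows $\overline{G}$ does \emph{not} satisfy the "$\geq n-2$" condition, hence $\operatorname{Z}(\overline{G}) \leq n-3$. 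Combined with the lower bound, this yields equality. So in fact the only thing I really need is a single induced $P_4$ in $\overline{G}$, which is handed to me by the diameter argument exactly as in the proof of Proposition \ref{treesandstars}. The $K_{2,2}$-free and bipartite hypotheses, interestingly, are not needed for the upper bound — only for the lower bound (bipartite/$K_{2,2}$-free gives no $K_{2,2}$ subgraph) — so the proof is genuinely short.

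Let me double-check the one subtle point: that a graph which is connected, has $n\geq 4$ vertices, and is not $K_{1,n-1}$ must contain an induced $P_4$. This is the standard fact that a connected graph with no induced $P_4$ (a cograph that is connected) whose... no, that's not quite it either. The clean route: if $G$ is connected with diameter $\geq 3$, any pair of vertices at distance $3$ together with a shortest path between them induces $P_4$. If instead $\operatorname{diam}(G)\leq 2$, I would argue directly that a connected bipartite graph of diameter $\leq 2$ which is $K_{2,2}$-free must be a star: in a bipartite graph any two vertices in the same part are at even distance, so diameter $\leq 2$ forces them to have a common neighbor; if some part has $\geq 2$ vertices each of degree $\geq 2$, a short $K_{2,2}$-free plus common-neighbor count argument produces a $C_4 = K_{2,2}$. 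Hence one part is a single vertex or all-but-one vertices have degree $1$, i.e. $G=K_{1,n-1}$, contradiction. So the diameter $\geq 3$ case always applies, and the main (mild) obstacle is just cleanly ruling out the diameter-$\leq 2$ case using bipartiteness and $K_{2,2}$-freeness; everything else is immediate from Theorem \ref{thm:SubKrs} and Lemma \ref{InducedSubgraphs}.
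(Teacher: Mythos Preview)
Your approach is correct and follows the same skeleton as the paper: the lower bound is Theorem~\ref{thm:SubKrs} with $r=s=2$, and the upper bound comes from Lemma~\ref{InducedSubgraphs} by exhibiting an induced $P_4$ in $\overline{G}$. Where you differ from the paper is in \emph{how} you produce that $P_4$.

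The paper argues by contradiction on the $\overline{G}$ side: it first checks that $\overline{G}$ is connected (else $G$ would be complete bipartite), then supposes $\operatorname{Z}(\overline{G})\ge n-2$, so by Lemma~\ref{InducedSubgraphs} $\overline{G}$ is $P_4$-free, i.e.\ a connected cograph; but a connected cograph has disconnected complement, contradicting the connectedness of $G$. You instead work directly in $G$: show $\operatorname{diam}(G)\ge 3$, pull out an induced $P_4$ from a geodesic, and use that $P_4$ is self-complementary. Your route is more elementary in that it avoids the cograph structure theorem, at the cost of a small ad hoc argument about bipartite graphs of diameter at most $2$.

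One suggestion on that last step: your treatment of the $\operatorname{diam}(G)\le 2$ case is a bit roundabout. The cleanest way is to note that in a connected bipartite graph, any two vertices in \emph{different} parts are at odd distance, so $\operatorname{diam}(G)\le 2$ forces every cross pair to be adjacent, i.e.\ $G$ is complete bipartite; then $K_{2,2}$-freeness forces one part to have size $1$, so $G=K_{1,n-1}$, contradiction. This replaces your ``common-neighbor count'' sketch and the slightly loose ``all-but-one vertices have degree $1$'' clause with a one-line parity observation.
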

\begin{proof} Observe that if $\overline{G}$ is disconnected, then $\overline{G}$ is the disjoint union of two complete graphs. This implies $G$ is the complete bipartite graph $K_{r,s}$, which contradicts our assumptions of $G$ being $K_{2,2}$-free and $G\ne K_{1,n-1}$.
Now let $G$ be a connected bipartite graph on $n\geq 4$ vertices that is $K_{2,2}$-free and assume $G\ne K_{1,r}$.  By Theorem \ref{thm:SubKrs}, we know $\operatorname{Z}(\overline{G}) \geq  n-3$. Assume $\operatorname{Z}(\overline{G}) = n-2$. From Lemma \ref{InducedSubgraphs}, $\overline{G}$ does not contain $P_4$ as an induced subgraph and therefore diam$(\overline{G}) = 2$. This implies  $\overline{G}$ is a connected cograph \cite{clsb}. However, it is well-known that any connected cograph has a disconnected complement (since any cograph is either the union or join of two smaller cographs).
\end{proof}

\section{Unicyclic Graphs}\label{sec:uni}
In Section \ref{sec:KeyResult}, Proposition \ref{treesandstars} gives us the zero forcing number for the complement of any acyclic graph. The next logical step is to examine graphs that contain precisely one cycle. Such a graph is called a unicyclic graph. 
\begin{defn}
 A unicyclic graph is a connected graph containing exactly one cycle such that every vertex on the single cycle is adjacent to at least one vertex in a single tree. 
 \end{defn}
 
 \begin{figure}[h!]
     \centering
     \includegraphics[height = 4cm, width=4cm]{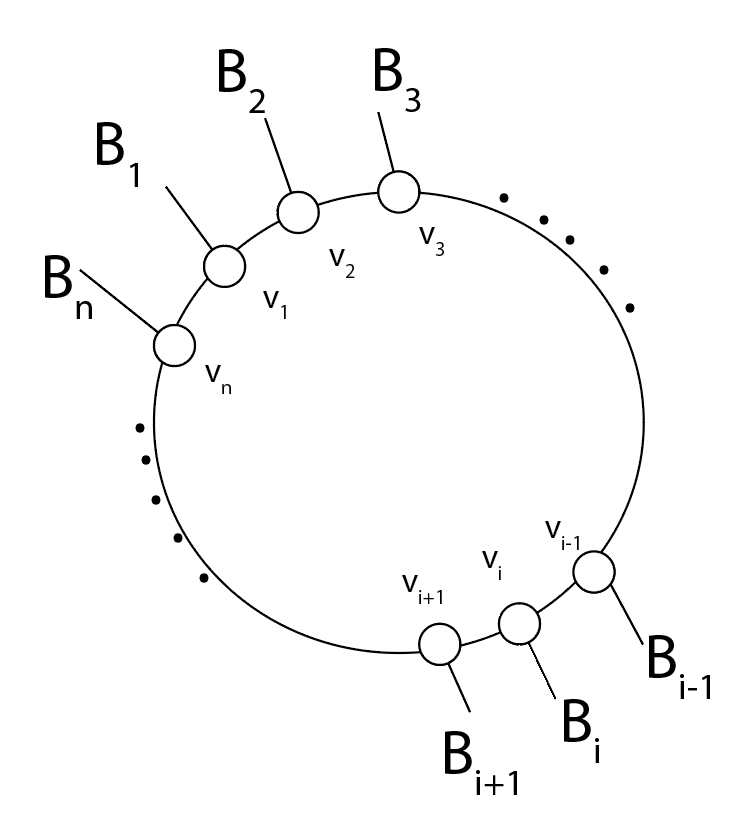}
     \caption{The general form of a unicyclic graph}
     \label{fig:unicyclic}
 \end{figure}
 
 
 Let $G$ be a unicyclic graph with vertices $v_1$, $v_2$, $\ldots$, $v_n$ on the single cycle, and forests $B_1$, $B_2$, $\ldots$, $B_n$, where each forest contains $m_i$ vertices excluding $v_i$. Note that in $G$, any vertex of $B_k$ with $k \neq i$ is nonadjacent to $v_i$ otherwise we create a cycle. A simple lower bound for the zero forcing number of the complement of this graph is therefore achieved by bounding the minimum degree of a vertex in this graph. If we let $m_{\max}$ be the maximum size of a forest in $G$ and consider the degree of $v_{\text{max}}$, a vertex on the cycle in $G$ that is adjacent to all vertices in the maximum sized forest $B_{\text{max}}$, we observe the following lower bound. 
 
 \begin{obs}\label{ZFLowerBounndUnicylicGraph}
   Let $G$ be a unicyclic graph as labeled in Figure \ref{fig:unicyclic}. Then, \[n + \sum \limits_{j = 1}^n m_j - m_{\text{max}} - 3 \leq \operatorname{Z}(\overline{G}).\]
 \end{obs}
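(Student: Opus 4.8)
The plan is to derive this as an immediate consequence of Theorem~\ref{thm:SubKrs} by exhibiting an appropriate complete bipartite graph that $G$ does not contain as a subgraph. Since $G$ is unicyclic, it contains exactly one cycle, and in particular $G$ is $K_{2,3}$-free: a $K_{2,3}$ subgraph contains multiple cycles (indeed three distinct $4$-cycles), which is impossible in a graph with a unique cycle. Applying Theorem~\ref{thm:SubKrs} with $r=2$, $s=3$ would give $\operatorname{Z}(\overline{G}) \geq n_G - 4$ where $n_G = n + \sum_{j=1}^n m_j$ is the total number of vertices, but this bound is weaker than the claimed one whenever $m_{\max} < 1$, so a direct $K_{r,s}$-free argument in this form is not quite the route. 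Instead, I would argue directly via the minimum degree, as the prose preceding the observation already hints.

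Here is the direct approach. Let $N = n + \sum_{j=1}^n m_j$ denote the total number of vertices of $G$, and let $v_{\max}$ be a cycle vertex whose attached forest $B_{\max}$ has the maximum size $m_{\max}$ among all $B_j$. I claim $\deg_{\overline{G}}(v_{\max}) \leq N - m_{\max} - 3$, equivalently $\deg_G(v_{\max}) \geq m_{\max} + 2$. Indeed, in $G$ the vertex $v_{\max}$ is adjacent to at least all $m_{\max}$ vertices of $B_{\max}$ (by the definition of a unicyclic graph, or at minimum this can be arranged; more carefully, $v_{\max}$ is adjacent to the roots of the trees in $B_{\max}$ and to its two cycle-neighbours), plus its two neighbours $v_{\max-1}, v_{\max+1}$ on the cycle. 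These two cycle-neighbours are not in $B_{\max}$, so $\deg_G(v_{\max}) \geq m_{\max} + 2$. Then the isolated-vertex / degree lower bound for zero forcing — every non-isolated vertex $u$ satisfies $\operatorname{Z}(H) \geq N - \deg_H(u)$, since the $\deg_H(u)$ neighbours of $u$ together with $u$ cannot be forced from outside...

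Actually the cleanest standard fact is: $\operatorname{Z}(H) \geq \delta(H)$ is false in general; the correct easy bound here is that for any vertex $u$, a zero forcing set must contain all but at most one vertex of $N_H[u]$ unless... let me restate. The bound I want is $\operatorname{Z}(H) \geq N - 1 - \Delta(H)$? No. The right elementary observation is: if $B$ is a zero forcing set of $H$ on $N$ vertices and $v \notin B$, then at the moment $v$ is forced, all neighbours of $v$ except its forcer-chain predecessor are already blue, so $|B| \geq \big(\text{number of non-forcing-chain vertices}\big)$; more simply, $\operatorname{Z}(H) \ge N - \Delta(H)$ can fail, but we only need the weaker consequence that there exists a vertex outside $B$ only if $|V \setminus B| \le 1 + $ (something). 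I would instead invoke Theorem~\ref{thm:SubKrs} with a carefully chosen $K_{r,s}$ tailored to $v_{\max}$.

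So the genuinely clean route: $G$ does not contain $K_{1,\,m_{\max}+3}$ with the ``large side'' forced to avoid... hmm, stars are always present. The correct statement must be the degree bound, so I will commit to it: it is standard (and follows from reversing forcing chains, cf.\ the discussion in the introduction) that $\operatorname{Z}(H) \ge N - \Delta(H) - 1$ does hold, because a vertex of maximum degree, if white, is forced only when all but one of its neighbours are blue, giving $|B| \ge N - \Delta(H) - 1$, and if it is blue we still get $|B|\ge 1 \ge N-\Delta(H)-1$ when... this needs $N - \Delta(H) - 1 \le$ something. I think the actual intended argument is simply: a minimum zero forcing set together with the forcing process shows $\operatorname{Z}(H) \geq N - \Delta(H)$ whenever $H$ has no isolated vertices is \emph{not} a theorem, so the authors likely just want $\operatorname{Z}(\overline G) \ge \deg_{\overline G}(v) \ge N - \deg_G(v_{\max}) - 1 \ge N - m_{\max} - 3$. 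I would present it thus:

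Since $\deg_G(v_{\max}) \geq m_{\max}+2$, we have $\deg_{\overline{G}}(v_{\max}) \leq (N-1) - (m_{\max}+2) = N - m_{\max} - 3$. Now apply Theorem~\ref{thm:SubKrs} with $r = 1$ and $s = \deg_{\overline G}(v_{\max}) + 1 \le N - m_{\max} - 2$: the graph $\overline{\overline{G}} = G$ does not contain $K_{1, \deg_{\overline G}(v_{\max})+1}$ centered appropriately — no, that is not right either since any vertex of degree $d$ yields a $K_{1,d}$.

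**Let me just state the honest plan.** I would prove this directly: let $B$ be a minimum zero forcing set of $\overline{G}$. If $v_{\max} \in V(\overline G) \setminus B$, then in the forcing process $v_{\max}$ is forced by some vertex $u$, at which point every neighbour of $v_{\max}$ in $\overline{G}$ other than $u$ lies in the already-blue set; hence $B$ plus the vertices forced before $v_{\max}$ contains $N_{\overline G}(v_{\max}) \setminus \{u\}$, and counting gives $|B| \geq N - \deg_{\overline G}(v_{\max}) - 1 \geq N - (N - m_{\max}-3) - 1 = m_{\max}+2$, which is too weak. Therefore the observation as stated must use a global count, not a single vertex — the term $\sum m_j$ is linear in $N$. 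So in fact the \textbf{intended} proof is: $G$ is $K_{2,3}$-free is false as a sharpener; the real content is that a zero forcing set of $\overline{G}$ omitting $W = V\setminus B$ runs a chain of forces, and (exactly as in the proof of Theorem~\ref{thm:SubKrs}) produces in $G$ a vertex adjacent to $|W|-1$ others; taking $|W| = m_{\max}+3$ one builds a star $K_{1,m_{\max}+2}$ inside $G$ all of whose leaves avoid the cycle structure, forcing one of them to live in a single forest $B_j$, contradiction with $m_{\max}$ being the max forest size once $|W|-1 > m_{\max}+1$. Concretely:

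Suppose $\operatorname{Z}(\overline{G}) \le N - m_{\max} - 4$, and let $W$ be the initial white set, $|W| = m_{\max}+4$. Exactly as in Theorem~\ref{thm:SubKrs}, a chain of forces yields vertices $v_1, \dots, v_{|W|}$ with $v_i$ adjacent in $G$ to $\{w_{i+1}, \dots, w_{|W|}\}$. Taking $i = 1$: $v_1$ is adjacent in $G$ to $m_{\max}+3$ vertices $w_2, \dots, w_{m_{\max}+4}$. At most two of these lie on the cycle (a cycle vertex $v_1$ has two cycle-neighbours; an off-cycle $v_1$ has at most... on a unicyclic graph any vertex has at most two neighbours total that are on the cycle, since three would create an extra cycle). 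Hence $v_1$ has at least $m_{\max}+1$ neighbours in $G$ lying off the cycle. Off-cycle neighbours of $v_1$ all belong to the forests; by the no-extra-cycle constraint all off-cycle neighbours of a given vertex $v_1$ lie in a single forest $B_j$ together with possibly... — and $v_1$ itself together with these $m_{\max}+1$ vertices sits inside one tree component, forcing $m_j \ge m_{\max}+1$, contradicting the maximality of $m_{\max}$.

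I expect the main obstacle to be exactly this geometric bookkeeping: precisely bounding how many of $v_1$'s $G$-neighbours can be "spread out" across the cycle and distinct forests versus how many are crammed into one forest, and handling the case where $v_1$ is itself a cycle vertex separately from the case where it lies in some forest. I will write the final proof as the short reduction: invoke the argument of Theorem~\ref{thm:SubKrs} to extract the high-$G$-degree vertex $v_1$, then argue its neighbourhood cannot fit in $G$ without exceeding $m_{\max}$, completing the contradiction.
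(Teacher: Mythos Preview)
The paper's justification (the sentence immediately preceding the observation) is simply the standard minimum-degree bound $\operatorname{Z}(H) \geq \delta(H)$ applied to $H = \overline{G}$. You explicitly dismissed this bound as ``false in general,'' but it \emph{is} true and standard (it appears already in \cite{AIM}): if $B \neq V$ is a zero forcing set, the first force is performed by some $v \in B$ with a unique white neighbor, so $v$ together with its $\deg(v)-1$ blue neighbors gives $|B| \geq \deg(v) \geq \delta(H)$. Since every vertex of the unicyclic graph $G$ has degree at most $m_{\max}+2$ (a cycle vertex $v_i$ has its two cycle-neighbors plus at most $m_i \leq m_{\max}$ neighbors in its forest $B_i$; a forest vertex in $B_i$ has all its neighbors inside $B_i \cup \{v_i\}$, hence degree at most $m_i \leq m_{\max}$), we get $\delta(\overline{G}) \geq N - 1 - (m_{\max}+2) = N - m_{\max} - 3$, and the observation follows in one line.

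You also dismissed the route via Theorem~\ref{thm:SubKrs} with $r=1$, objecting that ``any vertex of degree $d$ yields a $K_{1,d}$.'' But that is precisely the point: since $\Delta(G) \leq m_{\max}+2$, the graph $G$ contains no $K_{1,\,m_{\max}+3}$ subgraph, and Theorem~\ref{thm:SubKrs} with $(r,s)=(1,\,m_{\max}+3)$ immediately gives $\operatorname{Z}(\overline{G}) \geq N - m_{\max} - 3$. (The $r=1$ case of Theorem~\ref{thm:SubKrs} \emph{is} the minimum-degree bound for the complement.)

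Your final plan---re-running the proof of Theorem~\ref{thm:SubKrs} to extract a vertex $v_1$ with $\deg_G(v_1) \geq m_{\max}+3$ and then deriving a contradiction---is correct in outline and would succeed, but it is just a roundabout rediscovery of the fact $\Delta(G) \leq m_{\max}+2$. The ``geometric bookkeeping'' you flag as the main obstacle is the two-line degree count above, not a genuine case analysis.
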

 Not only can this bound can be improved upon, but we can also find the exact zero forcing number of the complement of any unicyclic graph. This is the main topic for what follows in this section.

\begin{rem}\label{ZFUnicyclicBounds} If $G$ is a tree, we know $\operatorname{Z}(\overline{G}) = n-3$ or in the special case when $G$ is a star, $\operatorname{Z}(\overline{G}) = n-1$. We can view a unicyclic graph $G$ as being a tree with an additional edge, and therefore, $\overline{G}$ being the complement of a tree minus a single edge. So, combining this observation with the inequality for any graph $G$:
    \[-1 \le \operatorname{Z}(G) - \operatorname{Z}(G-e) \le 1\]
we have
    \[n-4 \le \operatorname{Z}(\overline{G}) \le n-2\]
for any unicyclic graph $G$. It is also helpful to note that the only graph on $n$ vertices whose zero forcing number is $n-1$ is the complete graph on $n$ vertices.
\end{rem}
We now characterize the unicyclic graphs $G$ with $Z(\overline{G}) = n-4, n-3, n-2$.

\begin{prop}\label{InducedP4}
    Suppose $G$ is a connected unicyclic graph, $|G|\ge 5$ with a cycle of length 4 or more. Then $\overline{G}$ contains $P_4$ as an induced subgraph. 
\end{prop}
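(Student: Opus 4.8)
The plan is to exhibit four vertices of $\overline{G}$ that induce a path $P_4$, equivalently (passing to complements) to find four vertices of $G$ that induce the complement of $P_4$, which is again $P_4$: so it suffices to find in $G$ four distinct vertices $a,b,c,d$ such that, among the six possible pairs, exactly the pairs $ab$, $bc$, $cd$ are \emph{non}-edges of $G$ and the pairs $ac$, $bd$, $ad$ \emph{are} edges — wait, more cleanly, I will just directly produce four vertices inducing $P_4$ in $\overline{G}$, i.e.\ four vertices $x_1,x_2,x_3,x_4$ with $x_1x_2,x_2x_3,x_3x_4 \in E(\overline G)$ and $x_1x_3, x_2x_4, x_1x_4 \notin E(\overline G)$; translating, in $G$ we need $x_1x_3,x_2x_4,x_1x_4 \in E(G)$ and $x_1x_2,x_2x_3,x_3x_4 \notin E(G)$. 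Since the cycle $C$ of $G$ has length $\ell \ge 4$, label four consecutive cycle vertices $u_1,u_2,u_3,u_4$ (distinct because $\ell\ge 4$). In $G$ the only adjacencies forced among these are $u_1u_2, u_2u_3, u_3u_4$, and since $G$ is unicyclic the chords $u_1u_3$, $u_2u_4$, $u_1u_4$ are present only if they lie on the cycle, which for $\ell \ge 5$ they do not, and for $\ell = 4$ we have $u_1u_4 \in E(G)$ but not $u_1u_3$ or $u_2u_4$.

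So I would split into two cases. If $\ell \ge 5$: the set $\{u_1,u_2,u_3,u_4\}$ induces exactly the path $u_1u_2u_3u_4$ in $G$ (no chords, by unicyclicity), hence induces $\overline{P_4}=P_4$ in $\overline G$? That is not right — $\overline{P_4}$ on vertices in path-order $u_1u_2u_3u_4$ is the path $u_2u_4u_1u_3$, which is still a $P_4$. So in this case $\{u_1,u_2,u_3,u_4\}$ already induces a $P_4$ in $\overline{G}$, done. If $\ell = 4$, so $C = u_1u_2u_3u_4u_1$: here $\{u_1,u_2,u_3,u_4\}$ induces $C_4$ in $G$, whose complement is $2K_2$, not $P_4$, so I must use a vertex off the cycle. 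By the definition of unicyclic graph in the paper, every cycle vertex is adjacent to at least one vertex of "a single tree"; since $|G|\ge 5$, there is a vertex $w \notin C$, and I can choose $w$ adjacent (in $G$) to some cycle vertex, say $w$ adjacent to $u_1$ (relabel so this holds). Then $w$ is non-adjacent in $G$ to $u_2, u_3, u_4$ (else a second cycle). Now consider the four vertices $w, u_1, u_2, u_3$ in $\overline G$: edges of $\overline G$ among them are $wu_2$, $wu_3$, $u_1u_3$, and non-edges are $wu_1$, $u_1u_2$, $u_2u_3$ — checking the pattern, $u_2 - w - u_3 - u_1$ is a path and there are no other edges, so these four vertices induce $P_4$ in $\overline G$. (Alternatively $\{w,u_1,u_3,u_4\}$ works by the same count.)

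The only real subtlety is the $\ell=4$ case and making sure the chosen off-cycle vertex $w$ is genuinely non-adjacent in $G$ to the other three cycle vertices — this is exactly where unicyclicity is used, since any such extra edge would create a second cycle through $C$, contradicting the hypothesis that $G$ has exactly one cycle. A second minor point is confirming $w$ is distinct from all $u_i$, which holds since $w \notin V(C)$. I would also remark that the hypothesis $|G|\ge 5$ is needed precisely to guarantee the existence of $w$ when $\ell = 4$ (when $\ell \ge 5$, $|G|\ge 5$ is automatic and no off-cycle vertex is needed). I expect the write-up to be short; the main thing to get right is the bookkeeping of which pairs are edges versus non-edges in $\overline G$ for the chosen four-vertex sets.
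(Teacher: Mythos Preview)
Your proposal is correct and follows essentially the same approach as the paper: split into the cases $\ell\ge 5$ (four consecutive cycle vertices induce $P_4$ in $G$) and $\ell=4$ (three consecutive cycle vertices plus one off-cycle neighbour induce $P_4$ in $G$), then pass to $\overline{G}$ via the self-complementarity of $P_4$. The only cosmetic difference is that the paper simply asserts ``$G$ contains $P_4$ as an induced subgraph, hence so does $\overline{G}$,'' while you explicitly verify the edge/non-edge pattern in $\overline{G}$; both amount to using $\overline{P_4}\cong P_4$.
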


\begin{proof}
Label the vertices of the cycle of $G$ as $v_1, ... , v_k$ with $k \geq 4$ as in Figure \ref{fig:unicyclic}. If $k \geq 5$, then the subgraph induced by the vertices $\{v_1, v_2,v_3, v_4\}$ is $P_4$. On the other hand if $k=4$, then since $|G|\geq 5$, there exists a vertex $u$ in $G$ not on the cycle but adjacent to a vertex on the cycle. Suppose, without loss of generality that $u$ is adjacent to $v_4$. Then the subgraph induced by the vertices $\{v_2,v_3, v_4,u\}$ is $P_4$. Thus in either case $G$ contains $P_4$ as an induced subgraph, hence so does $\overline{G}$.
\end{proof}

\begin{cor}\label{InducedP4Cor}
If $G$ is a unicyclic graph  of size $n\ge 5$ such that $G$ contains a cycle of length at least four, then $\operatorname{Z}(\overline{G}) \le n-3$.
\end{cor}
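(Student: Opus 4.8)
The plan is to combine Proposition~\ref{InducedP4} with Lemma~\ref{InducedSubgraphs} and the general upper bound from Remark~\ref{ZFUnicyclicBounds}. By Remark~\ref{ZFUnicyclicBounds}, we already know $\operatorname{Z}(\overline{G}) \le n-2$ for any unicyclic graph $G$, so the only thing to rule out is the possibility $\operatorname{Z}(\overline{G}) = n-2$.

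First I would invoke Proposition~\ref{InducedP4}: since $G$ is unicyclic of size $n \ge 5$ with a cycle of length at least four, $\overline{G}$ contains an induced $P_4$. Then I would apply Lemma~\ref{InducedSubgraphs} in its contrapositive form: a graph $H$ on at least three vertices with $\operatorname{Z}(H) \ge |H| - 2$ cannot contain $P_4$ (among the other listed graphs) as an induced subgraph. Since $\overline{G}$ does contain an induced $P_4$ and $|\overline{G}| = n \ge 5 \ge 3$, we conclude $\operatorname{Z}(\overline{G}) < n - 2$, i.e., $\operatorname{Z}(\overline{G}) \le n - 3$.

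I do not anticipate a serious obstacle here, since this is essentially a two-line deduction chaining together results already established. The one point to double-check is the hypothesis count for Lemma~\ref{InducedSubgraphs}, which requires $|\overline{G}| \ge 3$; this is immediate from $n \ge 5$. One could optionally remark that this bound is complementary to the lower bound $\operatorname{Z}(\overline{G}) \ge n-4$ from Remark~\ref{ZFUnicyclicBounds}, so that for such graphs the zero forcing number of the complement is pinned down to $n-3$ or $n-4$, setting up the finer characterization to follow.
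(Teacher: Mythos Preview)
Your proposal is correct and matches the paper's intended argument: the corollary is stated immediately after Proposition~\ref{InducedP4} with no separate proof, and the implicit reasoning is precisely to combine that proposition with Lemma~\ref{InducedSubgraphs} (as the paper does explicitly in analogous situations, e.g.\ in the proof of Proposition~\ref{uni-n2}). The appeal to Remark~\ref{ZFUnicyclicBounds} is harmless but unnecessary, since the contrapositive of Lemma~\ref{InducedSubgraphs} already yields $\operatorname{Z}(\overline{G}) \le n-3$ directly.
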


\begin{prop}\label{uni-n2}
    Let $G$ be a unicyclic graph on $n \geq 4$ vertices. Then, $\operatorname{Z}(\overline{G}) = n-2$ if and only if $G = K_{1,n-1}+e$ or in the special case when $n=4$, $G$ could also be isomorphic to $C_4$. 
\end{prop}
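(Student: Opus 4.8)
The plan is to prove both directions via the characterization results already available. First, for the ``if'' direction, I would check the two families explicitly. For $G = C_4$ with $n=4$, the complement $\overline{C_4}$ is $2K_2 = P_2 \cup P_2$, which has zero forcing number $2 = n-2$; this is a direct computation. For $G = K_{1,n-1}+e$ (a star with one extra edge joining two leaves), I would describe $\overline{G}$ explicitly: the center $c$ of the star is isolated in $\overline{G}$, the two leaves $u,v$ joined by $e$ are nonadjacent to each other in $\overline{G}$ but adjacent to all other leaves, and the remaining $n-3$ leaves form a clique in $\overline{G}$ together with $u$ and $v$. So $\overline{G}$ is $K_1$ (the isolated center) together with the join-like structure $K_{n-1}$ minus the single edge $uv$; equivalently $\overline{G} = K_1 \cup (K_{n-1} - e)$. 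Then $\operatorname{Z}(\overline{G}) = 1 + \operatorname{Z}(K_{n-1}-e) = 1 + (n-3) = n-2$, using the known fact that $\operatorname{Z}(K_m - e) = m-2$ for $m \ge 3$. Alternatively, one can invoke Lemma \ref{InducedSubgraphs}: check that $\overline{G}$ contains none of the forbidden induced subgraphs $P_2\cup P_2\cup P_2$, $P_3\cup P_2$, $P_4$, $\ltimes$, or the dart, hence $\operatorname{Z}(\overline{G}) \ge n-2$, and combine with the upper bound $n-2$ from Remark \ref{ZFUnicyclicBounds}.

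For the ``only if'' direction, suppose $\operatorname{Z}(\overline{G}) = n-2$. By Remark \ref{ZFUnicyclicBounds} this is the maximum possible value for a unicyclic graph, and by Lemma \ref{InducedSubgraphs} it is equivalent to $\overline{G}$ having none of the five listed forbidden induced subgraphs. The key structural consequence is that $\overline{G}$ contains no induced $P_4$, hence (by Corollary \ref{InducedP4Cor}) the cycle of $G$ has length exactly $3$. So write $G$ as a triangle $v_1 v_2 v_3$ with trees $B_1, B_2, B_3$ hanging off. I would then argue that at most one of the $B_i$ can be nontrivial and that the nontrivial one, if present, must be a single pendant vertex — any richer tree structure would create one of the forbidden induced subgraphs in $\overline{G}$ (for instance, two pendant vertices on different cycle vertices, or a path of length $2$ hanging off, forces an induced $P_4$ or $P_3 \cup P_2$ in $\overline{G}$). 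Concretely: if some $B_i$ contains a path $v_i$--$x$--$y$ with $y$ a leaf, the vertices $\{v_j, v_k, x, y\}$ (for $\{j,k\} = \{1,2,3\}\setminus\{i\}$) typically induce a forbidden configuration in $\overline{G}$; and if two distinct cycle vertices $v_i, v_j$ each have a pendant $x_i, x_j$, then $\{x_i, v_i, v_j, x_j\}$ induces $P_4$ in $\overline{G}$. Running through these cases shows $G$ is a triangle with at most one extra pendant vertex, i.e., $G = K_3$ (only when $n=3$, excluded since $n\ge 4$), $G = K_{1,n-1}+e$ in general, or the special small case $n=4$ where $C_4$ must also be allowed since it too has no induced $P_4$ in its complement.

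The subtle point requiring care is the $n=4$ boundary case: there $\overline{C_4} = P_2 \cup P_2$ indeed avoids all forbidden subgraphs of Lemma \ref{InducedSubgraphs} (it is exactly on the threshold), so $C_4$ genuinely is an extra solution and the statement correctly singles it out; for $n \ge 5$, Proposition \ref{InducedP4} rules out the $4$-cycle. I expect the main obstacle to be the case analysis in the ``only if'' direction — organizing the forbidden-subgraph arguments cleanly so that one shows, without excessive casework, that no tree $B_i$ beyond a single pendant edge can be attached. A clean way to do this is to note that if $G \ne K_{1,n-1}+e$ and $G$ is unicyclic with a triangle, then $G$ either has a cycle vertex with a pendant path of length $\ge 2$, or has pendants on two distinct cycle vertices, or has a cycle vertex with two pendants; in each situation one exhibits four vertices inducing $P_4$ (or another forbidden graph) in $\overline{G}$, contradicting $\operatorname{Z}(\overline{G}) = n-2$ via Lemma \ref{InducedSubgraphs}. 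Combined with the already-proved fact that a cycle of length $\ge 4$ forces an induced $P_4$ (Proposition \ref{InducedP4}), this exhausts all cases.
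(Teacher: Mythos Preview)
Your approach is essentially the same as the paper's: compute $\overline{K_{1,n-1}+e}=K_1\cup(K_{n-1}-e)$ directly for the ``if'' direction, then for ``only if'' use Corollary~\ref{InducedP4Cor} to reduce to a triangle (for $n\ge 5$), show via an induced $P_4$ that at most one cycle vertex can have degree $\ge 3$, and handle $n=4$ separately. The paper does exactly this, phrasing the final reduction as ``it is straightforward to deduce that if $G$ is not isomorphic to $K_{1,n-1}+e$, then $G$ will contain an induced $P_4$.''

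Two small slips in your details, however, should be fixed. First, the claim that the single nontrivial $B_i$ ``must be a single pendant vertex'' is wrong: in $K_{1,n-1}+e$ the tree hanging off the center has $n-3$ pendant vertices, not one. What you need (and what your subsequent case analysis actually checks) is that $B_i$ is a \emph{star centered at $v_i$}; correspondingly, your third listed case ``a cycle vertex with two pendants'' is not a bad case at all---it is part of $K_{1,n-1}+e$---so it should be deleted from your trichotomy. Second, in the pendant-path case your chosen vertex set $\{v_j,v_k,x,y\}$ induces $2K_2$ in $G$, hence $C_4$ in $\overline{G}$, which is \emph{not} among the forbidden subgraphs of Lemma~\ref{InducedSubgraphs}. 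Use instead $\{v_j,v_i,x,y\}$ (one other cycle vertex together with the path $v_i$--$x$--$y$): this induces $P_4$ in $G$, and since $P_4$ is self-complementary, also in $\overline{G}$. With these corrections your argument goes through and matches the paper's.
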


\begin{proof}
Let $G$ be a unicyclic graph. Assume $n \geq 5$ and that $G = K_{1,n-1}+e$. Then $\overline{G}$ has the structure of $(K_{n-1} - e) \sqcup \{v_0\}$ where $v_0$ is the center (or vertex of maximum degree) of $G$. So, \[\operatorname{Z}(\overline{G}) = \operatorname{Z}(K_{n-1}-e) + \operatorname{Z}(\{v\}) = (n - 3) +1 = n-2.\]

Now suppose $\operatorname{Z}(\overline{G}) = n-2$. If $G$ contains a cycle of length 4 or more and $n\geq 5$, then by Corollary \ref{InducedP4Cor}, we get $\operatorname{Z}(\overline{G}) \le n -3$. Hence $G$ must contain a cycle of length 3. Label the vertices on the 3-cycle of $G$ as $a, b, c$. Assume, that at least two vertices on this 3-cycle have degree at least 3, say deg$(a) \geq 3$ and deg$(b)\geq 3$. Let $u$ (and $v$) be a neighbour of $a$ (of $b$) not on the cycle. Now consider the subgraph of $G$ induced by the vertices ${u,a,b,v}$, which induces a $P_4$. Hence 
$\overline{G}$ contains $P_4$ as an induced subgraph. Thus $\operatorname{Z}(\overline{G}) \leq  n-3$, which is a contradiction. Hence at most one vertex on the 3-cycle has degree at least three. Since $n \geq 4$, assume vertex $a$ has degree at least 3. It is straightforward to deduce that if $G$ is not isomorphic to
$K_{1,n-1} + e$, then $G$ will contain an induced $P_4$.  Finally, if $n=4$, then the complement of $G$ is either a cycle of 4 vertices or is a path on 3 vertices with an additional isolated vertex. In either case the zero forcing number is two as needed. This completes the proof.
\end{proof}

\begin{rem}
From our previous proposition, all other unicyclic graphs have $\operatorname{Z}(\overline{G}) = n-3$ or $n-4$. In fact, those unicyclic graphs with $\operatorname{Z}(\overline{G}) = n-4$ are contained among those with $C_4$ as an induced subgraph, as the next proposition provides. 
\end{rem}
\begin{prop}\label{uni-c4}
    Let $G$ be a unicyclic graph and $\operatorname{Z}(\overline{G}) = n-4$. Then, $C_4$ is an induced subgraph of $G$.
\end{prop}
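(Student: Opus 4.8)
The statement to prove is the contrapositive-friendly claim: if $G$ is unicyclic with $\operatorname{Z}(\overline{G}) = n-4$, then $G$ contains $C_4$ as an induced subgraph. Since Remark \ref{ZFUnicyclicBounds} tells us $\operatorname{Z}(\overline{G}) \ge n-4$ always, the real content is that $n-4$ can only be achieved when $C_4 \le G$ as an induced subgraph. My approach is to assume $G$ has no induced $C_4$ and show $\operatorname{Z}(\overline{G}) \ge n-3$, i.e. that $\overline{G}$ has \emph{no} zero forcing set of size $n-4$. By Theorem \ref{thm:SubKrs} we already know $\operatorname{Z}(\overline{G}) \ge n-3$ whenever $G$ is $K_{2,3}$-free (take $r=2$, $s=3$, giving $\operatorname{Z}(\overline{G}) \ge n-4$)\,— wait, that only gives $n-4$; I need to extract the extra $+1$. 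So the cleaner route is: a unicyclic graph is $K_{2,2}$-free \emph{unless} its unique cycle is a triangle, and even a triangle does not yield $K_{2,2}$ as a subgraph. In fact the only complete bipartite subgraphs a unicyclic graph can contain are stars $K_{1,t}$ and at most one $K_{2,2}$ — and $K_{2,2}$ occurs as a subgraph of $G$ precisely when $G$ contains $C_4$ (the unique cycle is a $4$-cycle, which is $K_{2,2}$). So if $G$ has no $C_4$ as a subgraph, then $G$ is $K_{2,2}$-free, and Theorem \ref{thm:SubKrs} with $r=s=2$ gives $\operatorname{Z}(\overline{G}) \ge n-3$, contradiction.

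\textbf{The gap.} The issue is that the proposition asks for $C_4$ as an \emph{induced} subgraph, not just a subgraph. In a unicyclic graph, a $4$-cycle subgraph is automatically induced — a chord would create a second cycle, contradicting unicyclicity. Similarly, $K_{2,2} = C_4$ as a (not necessarily induced) subgraph of a unicyclic $G$ forces the unique cycle of $G$ to be exactly that $4$-cycle, because any $K_{2,2}$ subgraph contains a $4$-cycle and $G$ has only one cycle; hence the $K_{2,2}$ subgraph \emph{is} the (induced) $4$-cycle. So ``$G$ contains $C_4$ as an induced subgraph'' $\iff$ ``$G$ contains $C_4$ as a subgraph'' $\iff$ ``$G$ contains $K_{2,2}$ as a subgraph'' for unicyclic $G$. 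This equivalence is the one routine-but-essential lemma I would record explicitly before invoking Theorem \ref{thm:SubKrs}.

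\textbf{Steps, in order.} (1) Suppose for contradiction that $G$ is unicyclic, $\operatorname{Z}(\overline{G}) = n-4$, and $C_4$ is not an induced subgraph of $G$. (2) Argue that $G$ is then $K_{2,2}$-free as a subgraph: any copy of $K_{2,2}$ in $G$ is a $4$-cycle (possibly with chords, but chords would give extra cycles, impossible), so a $K_{2,2}$ subgraph would be an induced $C_4$ — contradiction. (3) Apply Theorem \ref{thm:SubKrs} with $r = s = 2$, $r + s = 4 \le n$ (valid since unicyclic graphs have $n \ge 3$, and if $n = 3, 4$ one checks the small cases directly, noting $n-4 \le 0$ forces these to be degenerate — actually for $n \le 4$, $\operatorname{Z}(\overline G) = n-4$ would mean $\operatorname{Z}(\overline G) \le 0$, impossible for $n < 4$ and meaning $\overline{G}$ is edgeless, i.e. $G = K_4$, which is not unicyclic; so $n \ge 5$ automatically). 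This yields $\operatorname{Z}(\overline{G}) \ge n - 4 + 1 = n - 3$, contradicting $\operatorname{Z}(\overline{G}) = n - 4$.

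\textbf{Main obstacle.} The only subtlety is Step (2) — being careful that a $K_{2,2}$ subgraph of $G$ really is forced to be an induced $C_4$ rather than, say, sitting inside a larger structure. The argument is: $K_{2,2}$ with parts $\{x,y\}$, $\{z,w\}$ has edges $xz, xw, yz, yw$, so $x,z,y,w,x$ is a $4$-cycle in $G$; since $G$ is unicyclic this is \emph{the} cycle of $G$, so $G$ has no other edge among $\{x,y,z,w\}$ (an edge $xy$ or $zw$ would be a chord creating a second cycle), hence $\{x,y,z,w\}$ induces exactly $C_4$. This is short but must be stated cleanly. Everything else is a direct citation of Theorem \ref{thm:SubKrs} and Remark \ref{ZFUnicyclicBounds}. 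I expect the actual proof in the paper may instead proceed more combinatorially by exhibiting the forcing process, but the route above via Theorem \ref{thm:SubKrs} is the conceptually cleanest.
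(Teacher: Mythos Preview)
Your proposal is correct and takes essentially the same approach as the paper: both argue by contraposition that a unicyclic graph with no induced $C_4$ is $K_{2,2}$-free, then invoke Theorem~\ref{thm:SubKrs} with $r=s=2$ to get $\operatorname{Z}(\overline{G}) \ge n-3$, contradicting $\operatorname{Z}(\overline{G}) = n-4$. Your write-up is in fact more careful than the paper's on two points --- you explicitly justify why a $K_{2,2}$ subgraph of a unicyclic graph must be an \emph{induced} $C_4$, and you note that the cycle-length-$3$ case needs the same treatment as lengths $\ge 5$ (the paper's proof only mentions $j>4$).
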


\begin{proof}
Let $G$ be unicyclic and $\operatorname{Z}(\overline{G}) = n-4$. If $G$ contains a cycle $C_j$ for $j > 4$, then $G$ is $K_{2,2}$-free, and so $\operatorname{Z}(\overline{G})\ge n-3$, by Theorem \ref{thm:SubKrs} contradicting our assumption of the zero forcing number. So, $G$ must contain the cycle $C_4$ as an induced subgraph.  
\end{proof}

From the proof of the previous proposition we observe that if $G$ is a unicyclic graph that contains $C_{\ell}$ with $\ell \geq 5$ as an induced subgraph, then $\operatorname{Z}(\overline{G}) = n-3$. Before we state the next result, we note that if $n=3, 4,$ then it is straightforward to verify the possible list of zero forcing numbers of the complements of such unicyclic graphs. 

\begin{thm} \label{mega_n-4}
Suppose $G$ is a unicyclic graph on $n \geq 5$ vertices. Assume $G$ contains $C_4$ as an induced subgraph. 
Then the following statements hold:

\begin{enumerate}
\item If at most one vertex on the cycle $C_4$ has degree 2, then $\operatorname{Z}(\overline{G}) = n-4$;
\item If exactly two vertices on the cycle $C_4$ have degree 2, and if
\begin{enumerate}
\item these vertices are adjacent, then $\operatorname{Z}(\overline{G}) = n-4$, or
\item  these vertices are not adjacent, then $\operatorname{Z}(\overline{G}) = n-3$;
\end{enumerate}
\item If exactly three vertices on the cycle $C_4$ have degree 2, then $\operatorname{Z}(\overline{G}) = n-3$.
\end{enumerate}
\end{thm}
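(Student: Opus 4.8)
The plan is to first collapse all five sub-cases into a single dichotomy, and then treat the two resulting directions separately.

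\emph{Setup and reduction.} Since $G$ is unicyclic and its unique cycle $C$ has length $4$, $C$ is chordless (a chord would produce a second cycle), so $C$ is induced; moreover every vertex off $C$ has at most one neighbour on $C$ (otherwise a second cycle appears). Consequently a cycle vertex has degree $2$ in $G$ exactly when no pendant tree hangs off it. By Corollary~\ref{InducedP4Cor} and Remark~\ref{ZFUnicyclicBounds} we have $n-4\le \operatorname{Z}(\overline{G})\le n-3$ in every case, so it suffices to decide which of the two values occurs. I would isolate the dichotomy: $\operatorname{Z}(\overline{G})=n-4$ precisely when $C$ has two \emph{adjacent} vertices each carrying a nonempty pendant tree, and $\operatorname{Z}(\overline{G})=n-3$ otherwise. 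Each listed case is then an unpacking of this: in Case~1 at least three cycle vertices carry trees, so two consecutive ones do; in Case~2(a) the two degree-$2$ vertices are adjacent, forcing the complementary (also adjacent) pair to carry trees; in Case~2(b) the two degree-$\ge 3$ vertices form the non-adjacent pair; in Case~3 only one cycle vertex carries a tree.

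\emph{Upper bound $n-4$ (Cases 1 and 2(a)).} Here I would write the cycle as $a,b,c,d$ in cyclic order (so $a\sim b\sim c\sim d\sim a$) with $a,b$ the two adjacent tree-bearing vertices, pick $p$ a neighbour of $a$ inside $a$'s tree and $q$ a neighbour of $b$ inside $b$'s tree, and take $B=V(G)\setminus\{p,q,b,d\}$, a set of size $n-4$. The verification uses only that in $G$ the vertex $p$ is nonadjacent to $b,c,d$, the vertex $q$ is nonadjacent to $a,c,d$ (both by the ``at most one cycle neighbour'' fact above), and $b\not\sim d$: from these one checks that in $\overline{G}$ the sequence of forces $a\to q$, then $c\to p$, then $q\to d$, then $d\to b$ is legal, so $B$ is a zero forcing set and $\operatorname{Z}(\overline{G})\le n-4$. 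Combined with the lower bound from Remark~\ref{ZFUnicyclicBounds}, equality holds.

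\emph{Lower bound $n-3$ (Cases 2(b) and 3).} Suppose for contradiction $\operatorname{Z}(\overline{G})=n-4$, with $B$ a zero forcing set of that size and white set $W=\{w_1,w_2,w_3,w_4\}$. Running the argument in the proof of Theorem~\ref{thm:SubKrs} with $r=s=2$ yields distinct vertices $v_1,v_2,v_3,v_4$ with $v_i$ forcing $w_i$, where $v_1\notin W$, $v_2\notin\{w_2,w_3,w_4\}$, $v_3\notin\{w_3,w_4\}$, and in $G$ the vertex $v_1$ is adjacent to $w_2,w_3,w_4$, the vertex $v_2$ is adjacent to $w_3,w_4$, and $v_3$ is adjacent to $w_4$. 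The four distinct vertices $v_1,v_2,w_3,w_4$ then induce a $K_{2,2}$ in $G$ with parts $\{v_1,v_2\}$ and $\{w_3,w_4\}$, which (as $G$ is unicyclic) must be the cycle $C$. Now $w_2\notin C$, so the cycle vertex $v_1$ has an off-cycle neighbour and $\deg_G(v_1)\ge 3$; similarly $v_3\notin C$, so the cycle vertex $w_4$ has an off-cycle neighbour and $\deg_G(w_4)\ge 3$. Since $v_1$ and $w_4$ lie in opposite parts of this $K_{2,2}$ they are adjacent on $C$, so $C$ has two adjacent vertices of degree $\ge 3$ --- which is impossible in Case~2(b) (there the two degree-$\ge 3$ vertices form the non-adjacent pair) and impossible in Case~3 (there only one cycle vertex has degree $\ge 3$). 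Hence $\operatorname{Z}(\overline{G})=n-3$ in these cases.

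\emph{Anticipated main obstacle.} The upper-bound constructions are short but require care in tracking, at each forcing step, which pairs are adjacent in $G$ (hence nonadjacent in $\overline{G}$), so that the designed chain of forces is actually legal. The real work, though, is the lower bound: one must extract from the proof of Theorem~\ref{thm:SubKrs} more than the bare existence of a $K_{2,2}$ --- specifically the extra adjacencies $v_1\sim_G w_2$ (with $w_2$ off $C$) and $v_3\sim_G w_4$ (with $v_3$ off $C$) --- in order to pin down \emph{two adjacent} high-degree cycle vertices and not merely one. Recognizing that ``a pair of adjacent cycle vertices bearing pendant trees'' is the precise obstruction, and that the five stated sub-cases are only bookkeeping around it, is the key conceptual step.
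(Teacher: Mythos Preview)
Your proof is correct, and the lower-bound half is genuinely different from (and cleaner than) the paper's.

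For the upper bound in Cases~1 and~2(a), both you and the paper exhibit explicit zero forcing sets of size $n-4$ for $\overline{G}$; the constructions differ only cosmetically. You gain a little by treating all of Case~1 and Case~2(a) with a single set $B=V\setminus\{p,q,b,d\}$, whereas the paper uses three slightly different sets depending on whether zero, one, or two cycle vertices have degree~$2$.

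The real divergence is in Cases~2(b) and~3. The paper fixes the two forests $F_1,F_3$ hanging off the opposite cycle vertices and then exhaustively rules out every candidate set $B$ of size $n-4$ by casing on $|(V(F_1)\cup V(F_3))\setminus B|\in\{0,1,2,3,4\}$, each time arguing that forcing stalls or else produces a second $4$-cycle. Your argument instead revisits the mechanism inside Theorem~\ref{thm:SubKrs} with $r=s=2$: from a hypothetical zero forcing set of size $n-4$ you extract not just a $K_{2,2}$ (which must be the cycle $C$) but also the extra edges $v_1w_2$ and $v_3w_4$ in $G$, with $w_2,v_3\notin C$, forcing the two \emph{adjacent} cycle vertices $v_1$ and $w_4$ to have off-cycle neighbours. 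That single structural observation replaces the paper's entire case split, and it also makes transparent why ``two adjacent tree-bearing cycle vertices'' is exactly the dividing line between $n-4$ and $n-3$. The paper's approach has the virtue of being self-contained (it does not reopen the proof of Theorem~\ref{thm:SubKrs}), but yours is shorter and explains the phenomenon rather than merely verifying it.
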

\begin{proof}
Suppose the vertices on the cycle are labeled consecutively as $\{a,b,c,d\}$. 

Consider $G$ as in (1). First assume that no vertex on the cycle $C_4$ has degree two and let $\{\alpha, \beta, \gamma, \delta\}$ be neighbours of  $\{a,b,c,d\}$, respectively not on the cycle (see Figure \ref{fig1}). 
\begin{figure}[!h]
	\centering
	\includegraphics[scale=.8]{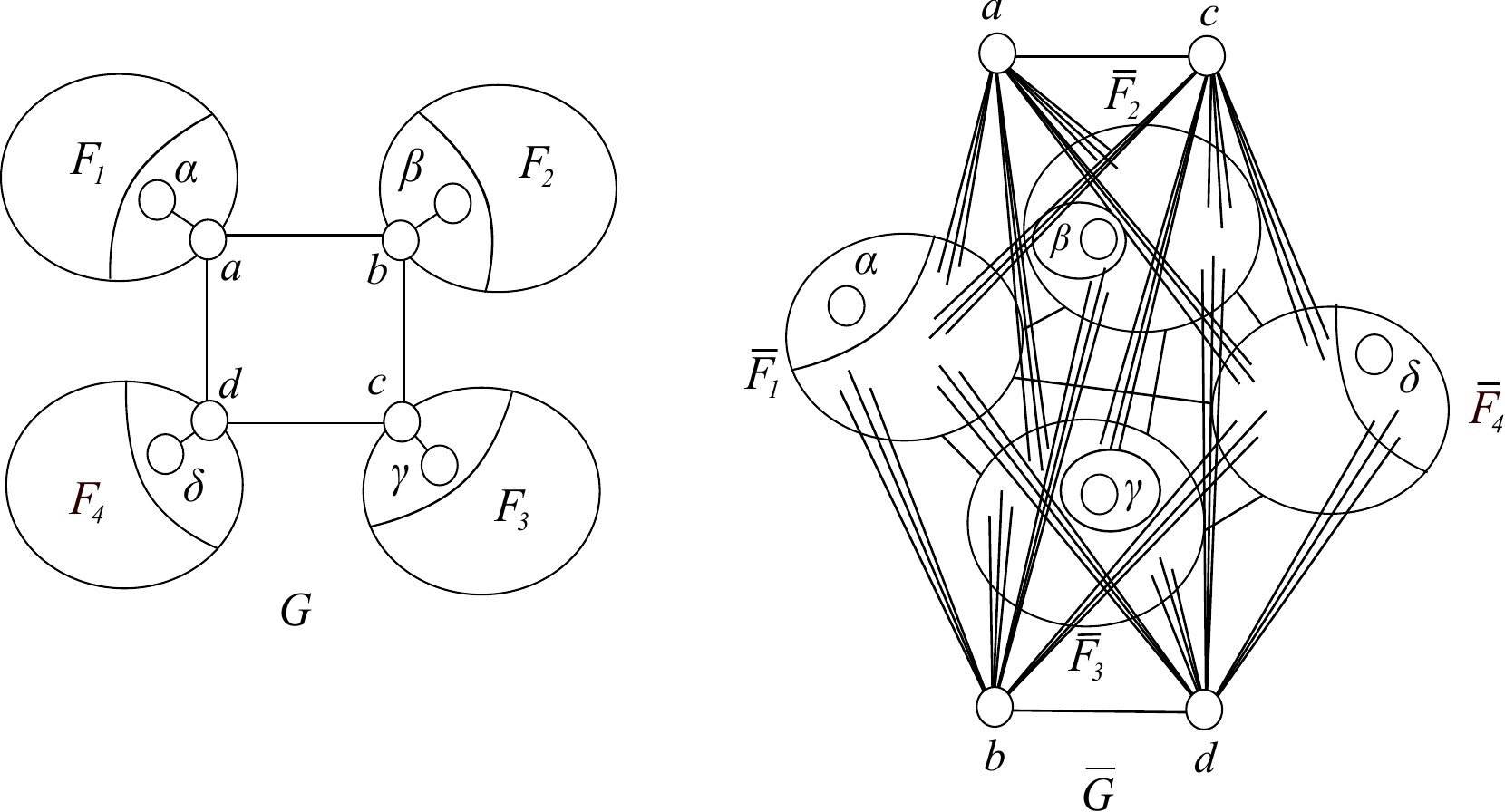} 
	\caption{Unicyclic graph containing $C_4$} \label{fig1}
\end{figure}
Let $Z = V(\overline{G}) \setminus \{\alpha, b,c,d\}$. We claim that $Z$ is a zero forcing set for $\overline{G}$. By definition of $Z$ if follows that vertex $a$ in $\overline{G}$ has only one white neighbour, namely $c$. So $a$ can force vertex $c$. From here we note that vertex $c$ now has only one white neighbour, namely $\alpha$. So $c$ can force $\alpha$. At this stage we observe that $\beta$ has only one white neighbour, namely $d$, since $\beta$ is adjacent to $b$ in $G$. Thus $\beta$ can force $d$, and finally the last white vertex ($b$) can be forced. Hence $\operatorname{Z}(\overline{G})  \leq n-4$, but from the remarks at the beginning of this section we also know  $\operatorname{Z}(\overline{G})  \geq n-4$, from which we conclude that  $\operatorname{Z}(\overline{G})  = n-4$. If only one vertex has degree two on the cycle $C_4$, then a very similar argument can be applied as used above to verify that  $\operatorname{Z}(\overline{G})  = n-4$. In this case, assume that vertex $c$ has degree 2, and that vertices $\{\alpha, \delta\}$ are neighbours of vertices $\{a,d\}$, respectively not on the cycle. Now let $\operatorname{Z} = V(\overline{G}) \setminus \{\alpha, \delta, b,d\}$. As above it follows that $Z$ is a minimum zero forcing set for $\overline{G}$.

Suppose $G$ satisfies (2a). Assume the two adjacent vertices on the cycle with degree two are $c$ and $d$. Using a similar approach as in the previous case, suppose $\alpha$ and $\beta$ are neighbours of $a$ and $b$, respectively not on the cycle. Let $Z = V(\overline{G}) \setminus \{\alpha, \beta,b,d\}$. We claim that $Z$ is a zero forcing set for $\overline{G}$. Since $a$ is not adjacent to any of $\alpha, b,$ or $d$ in  $\overline{G}$ it follows that $a$ can force $\beta$. From here we have the following possible sequence of forces: $c$ can force $\alpha$; $\beta$ can force $\alpha$, and finally $b$ can forces $d$. Hence $Z$ is  zero forcing set for  $\overline{G}$, from which it follows that $\operatorname{Z}(\overline{G})  = n-4$ as $n-4$ is a lower bound as observed above.

Regarding (2b), assume that the two non-adjacent vertices on the cycle with degree two are $b$ and $d$, and let $\alpha$ and $\gamma$ be neighbours of $a$ and $c$, respectively not on the cycle. Denote the forests adjacent to $a$ and $c$ by $F_1$ and $F_3$, respectively (see Figure \ref{fig2}).
\begin{figure}[!h]
	\centering
	\includegraphics[scale=.8]{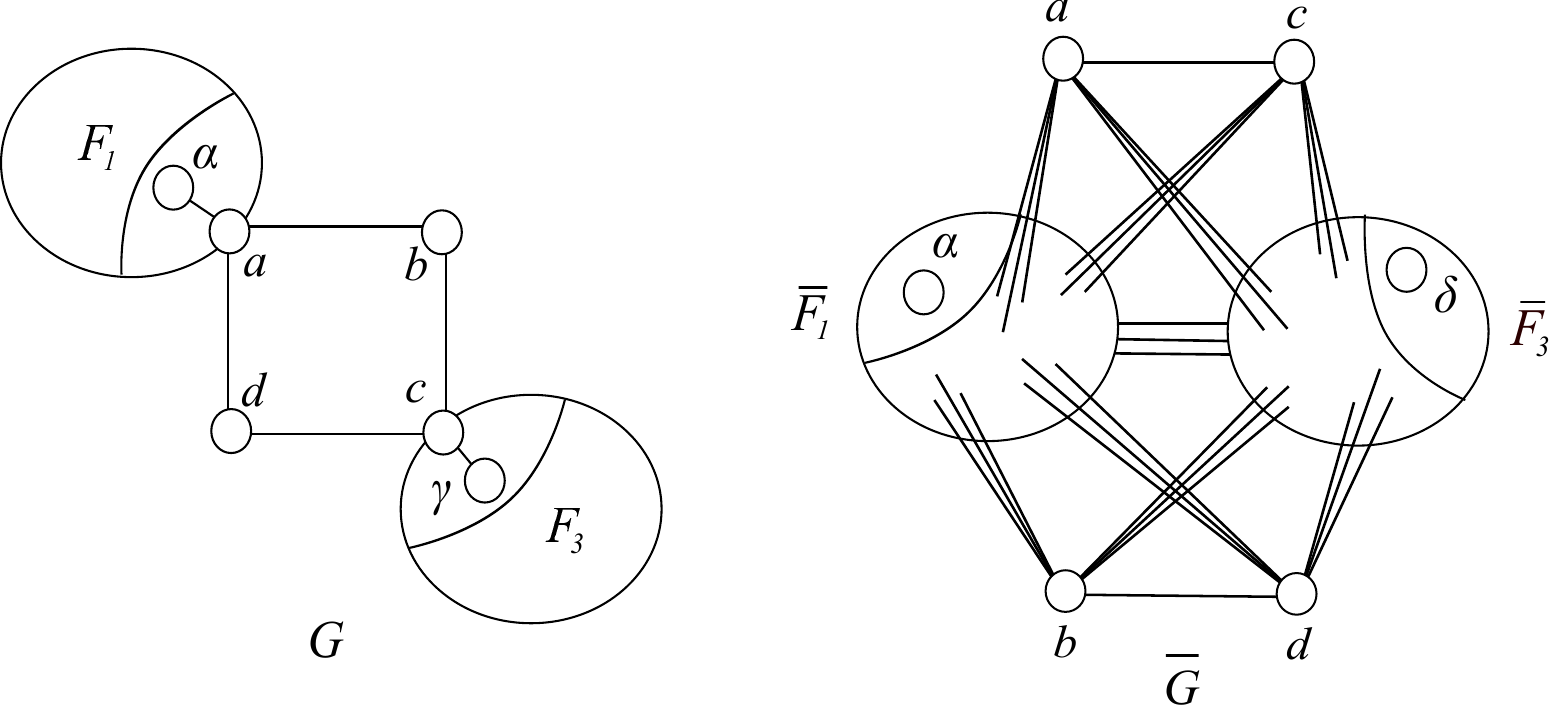} 
	\caption{Unicyclic graph containing $C_4$ satisfying item (2b)} \label{fig2}
\end{figure}
To prove that the zero forcing number of $\overline{G}$ is   $n-3$, we will establish that no subset of vertices with size $n-4$ can be  a zero forcing set for  $\overline{G}$. Let $B \subset V(\overline{G})$ with $|B|=n-4$ and consider the size of the set $(V(F_1) \cup V(F_3)) \cap B$. If $B=V(F_1) \cup V(F_3)$, then it is clear that no forces are possible. Now assume $|(V(F_1) \cup V(F_3)) \setminus B|=1$. Thus exactly one vertex on the cycle of $G$ is blue, which leads to two possible cases: either $a$ is blue or $b$ is blue. In either case neither $a$ nor $b$ can perform an initial force. If there is a vertex $x$ that is blue and in $V(F_1) \cup V(F_3)$, then $x$ has at least two white neighbours among the vertices on the cycle in either case. Hence $B$ is not a zero forcing set for $\overline{G}$. Next assume that $|(V(F_1) \cup V(F_3)) \setminus B|=2$. In this case there are exactly two vertices on the cycle that are blue, which leads to three possible situations: $(i)$ $a$ and $c$ are blue; $(ii)$ $b$ and $d$ are blue; or $(iii)$ $a$ and $b$ are blue. 

First consider the case in which $a$ and $c$ are blue. Then it follows that no vertex in   $V(F_1) \cup V(F_3)$ can apply a force (as both $b$ and $d$ are white). So assume that $a$ can force a vertex in   $V(F_1) \cup V(F_3)$, call it $x$. This can only happen if $x$ is in $F_1$, is a neighbour of $a$,  and both $x$ and $\alpha$ are white, and they are the only white vertices in $V(F_1) \cup V(F_3)$. Then $c$ can force $\alpha$. However, no other forces are possible as both $b$ and $d$ are white. So now assume that $b$ and $d$ are blue. Then since $\{b, d\}$ are joined to the vertices in  $V(F_1) \cup V(F_3)$ in  $\overline{G}$, neither $b$ nor $d$ can perform a force at this initial stage. Assume there is a vertex $x$, in $F_1$ that performs the first force. Then it follows that the remaining two white vertices ($u$ and $v$) must also be in $F_3$ (since the vertices of $F_1$ are joined to the vertices in $F_3$),  that $x=\alpha$ and $x$ must force $c$. Since $c$ has at least two white neighbours (including $a$), $c$ cannot perform a force. So there must be another vertex $y$ that can perform a force at this stage. Since no blue vertex in $F_3$ can force, it follows that $y$ is in $F_1$ and $y$ is not $\alpha$. In this case the only possible force is for $y$ to force vertex $a$. In this case we must have that both $\alpha$ and $y$ adjacent $u$ and $v$ in $G$. Hence $G$ contains two 4-cycles, which is a contradiction. The last case $(iii)$ follows in a similar manner. 

Now assume $|(V(F_1) \cup V(F_3)) \setminus B|=3$. In this case there are two cases to consider since there is exactly one white vertex on the cycle. Suppose either $a$ is white or $b$ is white. In either case, we can apply arguments similar to the above case to deduce that $B$ was a zero forcing set, then $G$ would contain more than one 4-cycle, which is not possible. Finally, suppose  $|(V(F_1) \cup V(F_3)) \setminus B|=4$. Then all white vertices are among the vertices of $V(F_1) \cup V(F_2)$. Therefore  either none of the vertices $\{a,b,c,d\}$ can perform a force or, say all of the vertices in $F_1$ are adjacent to $a$, In the latter case, $a$ could force a vertex in $F_3$, only if there is just one white vertex in $F_3$. After this no further forces are possible. Additionally, at least one of $F_1$ or $F_3$ must contain at most one white vertex, since the vertices of $F_1$ are joined to the vertices of $F_3$ in $\overline{G}$. Suppose $F_1$ contains at most one white vertex, say $u$, and $x,y,z$ are three white vertices in $F_3$. Then the first force is accomplished by a vertex $w$ in $F_3$, and in this case $w$  forces $u$. Thus $w$ must be adjacent to all of $x,y,z$ in $G$. The next possible force is by, say $w'$ and $w'$ can force $x$. Hence $w'$ must be adjacent to both $y$ and $z$ in $G$. Thus the vertices $w,y,z,w'$ form a 
 4-cycle in $G$, which is not possible. Consequently, we have argued that there is not a zero forcing set for $\overline{G}$ of size $n-4$, so we may conclude that $\operatorname{Z}(\overline{G})  = n-3$. 

Finally, suppose (3) holds. Then we can argue is a manner similar to (2b) whereby we consider all possible  subsets $B$ of size $n-4$ and verify that none of them can be a minimum zero forcing set by breaking the argument depending on the overlap between $B$ and the vertices in the forest adjacent to exactly one vertex on the cycle. 

This completes the proof.\end{proof}

\begin{cor}\label{z-uni}
All unicylic graphs $G$ with $G\ne K_{1,n-1}+ e$ and containing  $C_\ell$, $\ell \geq 5$ as an induced subgraph, or satisfy (2b) or (3) in Theorem \ref{mega_n-4} have $\operatorname{Z}(\overline{G}) = n-3$. 
\end{cor}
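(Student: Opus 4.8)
The plan is to derive Corollary \ref{z-uni} as an immediate consequence of the classification results already established in this section, by showing that in each of the listed cases the zero forcing number of $\overline{G}$ is neither $n-2$ nor $n-4$, and then invoking Remark \ref{ZFUnicyclicBounds}, which bounds $\operatorname{Z}(\overline{G})$ between $n-4$ and $n-2$ for any unicyclic graph. Since the only way to be pinned to the value $n-3$ is to rule out the two extremes, the proof splits into two exclusions.

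First I would dispatch the possibility $\operatorname{Z}(\overline{G}) = n-2$. By Proposition \ref{uni-n2}, the only unicyclic graphs on $n\geq 4$ vertices with $\operatorname{Z}(\overline{G}) = n-2$ are $K_{1,n-1}+e$ and (when $n=4$) $C_4$. The hypothesis of the corollary explicitly excludes $G = K_{1,n-1}+e$. For the case $G \cong C_4$: a graph containing $C_\ell$ with $\ell \geq 5$ as an induced subgraph has at least $5$ vertices and hence is not $C_4$; a graph satisfying (2b) or (3) of Theorem \ref{mega_n-4} has $C_4$ as an induced subgraph together with at least one extra vertex adjacent to a cycle vertex (so that the degree-$2$ count on the cycle is strictly less than four), hence has $n \geq 5$ and is likewise not $C_4$. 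So in every case covered by the corollary, $\operatorname{Z}(\overline{G}) \neq n-2$.

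Next I would rule out $\operatorname{Z}(\overline{G}) = n-4$. If $G$ contains an induced $C_\ell$ with $\ell\geq 5$, then $G$ is $K_{2,2}$-free, so by Theorem \ref{thm:SubKrs} we get $\operatorname{Z}(\overline{G}) \geq n-3$; combined with Corollary \ref{InducedP4Cor} (which applies since $G$ has a cycle of length $\geq 5 \geq 4$ and $n\geq 5$) this forces $\operatorname{Z}(\overline{G}) = n-3$ directly. If instead $G$ satisfies (2b) or (3) of Theorem \ref{mega_n-4}, then that theorem already asserts $\operatorname{Z}(\overline{G}) = n-3$, so there is nothing further to prove. Assembling these observations yields the claim.

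I do not expect any serious obstacle here, since all the substantive work is done in Proposition \ref{uni-n2} and Theorem \ref{mega_n-4}; the corollary is essentially a bookkeeping statement that collects the cases yielding the middle value. The only point requiring a little care is the verification that the hypotheses of the corollary genuinely exclude the small exceptional graph $C_4$ (so that Proposition \ref{uni-n2} leaves $n-2$ impossible) and that the sub-case $G\cong K_{1,n-1}+e$ — which can have $C_4$ structure only when $n = 4$, but is excluded by hypothesis anyway — is handled; once this is noted, the argument is complete.
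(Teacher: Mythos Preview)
Your proposal is correct and follows essentially the same approach the paper intends: the corollary is stated without a separate proof because it is an immediate consolidation of Proposition~\ref{uni-n2}, Proposition~\ref{uni-c4} (equivalently Theorem~\ref{thm:SubKrs} for $K_{2,2}$-freeness when the girth is at least~5), Corollary~\ref{InducedP4Cor}, and Theorem~\ref{mega_n-4}, exactly as you outline. Your extra care in verifying that the hypotheses exclude the small exceptional graph $C_4$ (so that Proposition~\ref{uni-n2} really does rule out $n-2$) is appropriate and matches the implicit reasoning in the paper.
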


It is evident from the analysis above (see, for example, Corollary \ref{z-uni}) that for `most' unicyclic graphs $U$ on $n$ vertices we have $\operatorname{Z}(\overline{U}) = n - 3$. We know from the work above that there is essentially only one family of unicyclic graphs (on $n$ vertices) with the property that the zero forcing number of the complement of such graphs is $n-2$. Further it follows , that if $n \geq 5$, $\operatorname{Z}(\overline{C_n}) = n - 3$ while $\operatorname{Z}(\overline{C_3}) = 3$ and $\operatorname{Z}(\overline{C_4}) = 2$. Another illustrative example along these lines is the the $n-$sunlet graph. An $n-$sunlet graph is the graph on $2n$ vertices obtained by attaching $n$ pendant edges to a cycle graph $C_n$, i.e. the coronas $C_n \bigodot K_1$.  
Notice, the $n-$sunlet graph can be viewed as a $(n,n)-$sunlet graph, and thus, we obtain the following corollary.
If $n \geq 3$ and $n\neq 4$, then $\operatorname{Z}(\overline{C_n \bigodot K_1}) = 2n - 3$; if $n=4$, then $\operatorname{Z}(\overline{C_n \bigodot K_1}) = 2n - 4$. 

We close this section with an application of the work above to address the following question: Which connected unicyclic graphs $U$ satisfy $\operatorname{Z}(U)=\operatorname{Z}(\overline{U})$. This was partially addressed in the paper \cite{EKY15} where they characterized the connected unicyclic graphs $U$ on $n$ vertices with $\operatorname{Z}(U)=n-3$. 

\begin{cor}
Suppose $U$ is a connected unicyclic graph on $n$ vertices with $n \geq 4$. Then $\operatorname{Z}(U)=\operatorname{Z}(\overline{U})$ if and only if 
\begin{enumerate}
\item $n=4$ and $U$ is either $C_4$ of $K_3$ with a pendant added;
\item $n=5$ and $U$ is either $C_4$ with a pendant added or $K_3$ with a single pendant vertex added to two vertices on the triangle; or
\item $n$ is at least 6, and both $U$ and $U$ complement are connected and satisfy $\operatorname{Z}(U)=\operatorname{Z}(\overline{U})=n-3$ or $n=6$ and $U$ is $C_4$ with two pendant vertices added to neighbouring vertices on the 4-cycle. 
\end{enumerate}
\end{cor}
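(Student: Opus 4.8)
The plan is to prove the corollary by combining the characterization of unicyclic $U$ with $\operatorname{Z}(\overline U)\in\{n-2,n-3,n-4\}$ from Proposition~\ref{uni-n2}, Proposition~\ref{uni-c4}, and Theorem~\ref{mega_n-4} with the known characterization (from \cite{EKY15}) of connected unicyclic graphs $U$ with $\operatorname{Z}(U)=n-3$. First I would handle the small cases $n=4$ and $n=5$ by brute force: there are only finitely many connected unicyclic graphs on $4$ or $5$ vertices, and for each one we can compute both $\operatorname{Z}(U)$ and $\operatorname{Z}(\overline U)$ directly, reading off exactly the graphs listed in items (1) and (2). This disposes of the base cases and lets us assume $n\ge 6$ in the main argument.

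For $n\ge 6$, the strategy is to split on the value of $\operatorname{Z}(\overline U)$. If $\operatorname{Z}(\overline U)=n-2$, then by Proposition~\ref{uni-n2} (and since $n\ge 6$) we have $U=K_{1,n-1}+e$, so $\overline U=(K_{n-1}-e)\,\dot\cup\,\{v_0\}$ is disconnected; one checks $\operatorname{Z}(U)=n-2$ here as well, but $\overline U$ is disconnected, and I would argue this case contributes nothing new beyond what is already captured (or, depending on how one reads the statement, note that equality does hold but only for this one graph, which for $n\ge 6$ is excluded because $\overline U$ is not connected — matching the ``both $U$ and $\overline U$ connected'' hypothesis in item (3)). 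If $\operatorname{Z}(\overline U)=n-4$, then by Proposition~\ref{uni-c4} and Theorem~\ref{mega_n-4}(1) or (2a), $U$ contains an induced $C_4$ with at most one degree-two vertex on it, or with exactly two adjacent degree-two vertices. On the other hand, $\operatorname{Z}(U)=n-4$ would force $U$ to have a very sparse structure; I would invoke the \cite{EKY15} characterization (or a direct degree/path-cover argument) to show the only unicyclic graph on $n=6$ vertices with $\operatorname{Z}(U)=\operatorname{Z}(\overline U)=n-4=2$ is $C_4$ with two pendants on neighbouring cycle vertices, and that for $n\ge 7$ no unicyclic graph achieves $\operatorname{Z}(U)=n-4$ while also meeting the $C_4$-condition — giving the exceptional graph in item (3).

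The remaining, and generic, case is $\operatorname{Z}(\overline U)=n-3$, which by Corollary~\ref{z-uni} covers ``most'' unicyclic graphs. Here the condition $\operatorname{Z}(U)=\operatorname{Z}(\overline U)$ becomes exactly $\operatorname{Z}(U)=n-3$, and this is precisely the class characterized in \cite{EKY15}; combined with the requirement that $\overline U$ be connected (which, by the cograph/diameter observations used earlier, rules out the complete-bipartite-type degeneracies), we get the bulk of item (3). So the proof is essentially a bookkeeping exercise: enumerate which unicyclic families fall into each of the three possible values of $\operatorname{Z}(\overline U)$, intersect with the \cite{EKY15} list for $\operatorname{Z}(U)=n-3$ and with direct checks for $\operatorname{Z}(U)=n-4$ and $n-2$, and collect the survivors. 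The main obstacle I anticipate is the $\operatorname{Z}(\overline U)=n-4$ case: it requires simultaneously controlling $\operatorname{Z}(U)$ from below (which is not something proved in this excerpt and needs either the \cite{EKY15} result or a fresh path-cover argument) and matching it against the rather rigid $C_4$-structure of Theorem~\ref{mega_n-4}, and one must be careful that the sole exceptional graph on $6$ vertices is not missed and that nothing new appears for larger $n$. A secondary subtlety is making sure the small-$n$ casework in items (1) and (2) is exhaustive and that the ``$C_4$ or $K_3$ with pendants'' descriptions there are stated unambiguously.
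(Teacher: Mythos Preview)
Your outline is essentially the paper's own strategy: dispose of $n\in\{4,5\}$ by hand, then for $n\ge 6$ combine the classification of $\operatorname{Z}(\overline U)\in\{n-2,n-3,n-4\}$ from Propositions~\ref{uni-n2}, \ref{uni-c4} and Theorem~\ref{mega_n-4} with the \cite{EKY15} characterization of unicyclic $U$ having $\operatorname{Z}(U)=n-3$. The paper organizes the case split by girth rather than by the value of $\operatorname{Z}(\overline U)$, but the content is the same. One point where the paper is more concrete than your sketch: for the ``$\operatorname{Z}(\overline U)=n-4$'' (equivalently girth-$4$) case you say you would use \cite{EKY15} ``or a direct path-cover argument'' to rule out $\operatorname{Z}(U)=n-4$ for $n\ge 7$; the paper supplies exactly such a direct argument, observing that for each tree hanging off a cycle vertex one may colour all but one of its vertices and force the attached cycle vertex, which yields $\operatorname{Z}(U)\le n-5$ whenever the $C_4$ has at least two vertices of degree $\ge 3$ (with enough room), leaving only the $n=6$ exception.

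There is one genuine slip you should fix. In the $\operatorname{Z}(\overline U)=n-2$ branch you assert that for $U=K_{1,n-1}+e$ one also has $\operatorname{Z}(U)=n-2$. That is false for $n\ge 5$: colouring a leaf on the added edge together with any leaf off it already forces the centre and then the second endpoint of $e$, so $\operatorname{Z}(U)\le n-3$ (indeed $\operatorname{Z}(K_{1,n-1}+e)$ is much smaller). If your claim were correct it would actually produce a counterexample to the corollary, since $K_{1,n-1}+e$ would satisfy $\operatorname{Z}(U)=\operatorname{Z}(\overline U)=n-2$ without appearing anywhere in items (1)--(3); you cannot rescue this by invoking ``$\overline U$ connected'' from item~(3), because that clause is part of the conclusion you are proving, not a hypothesis you may impose. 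The correct disposal of this branch is simply that $\operatorname{Z}(U)<n-2=\operatorname{Z}(\overline U)$, so equality fails and the graph is excluded outright.
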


\begin{proof}
For the cases when $n=4$ or $5$, it is simple to exhaustively check all such cases and deduce the listed claims. Suppose $n \geq 6$. First it is easy to observe that if $U$ is $C_4$ with two pendant vertices added to neighbouring vertices on the 4-cycle, then $\operatorname{Z}(U)=\operatorname{Z}(\overline{U})=2=n-4$. So now we assume that $n\geq 6$, and that $U$ is a connected unicylic graph. 

Before we proceed with the remainder of the proof, we make the following simple observation. Suppose $v$ is a vertex on the cycle of $U$ and $u$ is a neighbour of $v$ not on the cycle. Then $u$ belongs to a tree, say $T$, that is 'adjacent' to the cycle in $U$. It is easy to see that we can colour all but one vertex in $T$ and by applying the colour change rule we can force all of the vertices in $T$ and the vertex $v$. To see this note that if $T$ is a star with centre $u$, then colour all of the vertices in $T$ except $u$, else $T$ is not a star in which case we can colour $u$ and all but one vertex in the remainder of $T$. 

Using this observation, it is easy to deduce that for any unicyclic graph $U$ with girth at least five, satisfies $\operatorname{Z}(U) \leq n-5$. Hence if we have $\operatorname{Z}(U)=\operatorname{Z}(\overline{U})$, then $U$ must have girth equal to 3 or 4. If the girth of $U$ is 3, then 
by Proposition \ref{uni-c4}, it follows that $\operatorname{Z}(\overline{U}) \in \{n-2,n-3\}$. However, since $n\geq 6$ it follows that $\operatorname{Z}(\overline{U}) \neq n-2$, using Proposition \ref{uni-n2}. Thus $U$ is a connected unicylic graph that satisfies $\operatorname{Z}(U)=\operatorname{Z}(\overline{U})=n-3$. Hence by Theorem 5.2 and Corollary 5.3 of \cite{EKY15} both $U$ and $\overline{U}$ are both connected, and $U$ is the vertex sum of $K_3$ and a leaf of a star. 

Now assume that girth of $U$ is equal to 4. Then it follows by applying \cite[Thm 5.2]{EKY15} that $\operatorname{Z}(U)=\operatorname{Z}(\overline{U})=n-4$. Now using Theorem \ref{mega_n-4} we now that $U$ falls into one of the following three cases:  at most one vertex on the cycle $C_4$ has degree 2, or exactly two adjacent vertices on the cycle $C_4$ have degree 2. Applying the observation above, by colouring all but one vertex in the forests attached to the vertices of degree larger than 2 will show that in all three cases we have $\operatorname{Z}(U) \leq n-5$, which is a contradiction.
\end{proof}

\section{Cactus Graphs}\label{sec:MultiCycle}
The next natural progression of our study of the zero forcing number of graph complements is examining graphs with more than one cycle. In this section, we determine the zero forcing number for the complement of cactus graphs, which represent a structural extension of the unicyclic graphs we explored in Section \ref{sec:uni}. 

\begin{defn}
 A \emph{cactus} (sometimes called a \emph{cactus tree}) is a connected graph in which any two simple cycles have at most one vertex in common. Equivalently, a cactus is a connected graph in which every edge belongs to at most one simple cycle, or (for nontrivial cactus) in which every block (maximal subgraph without a cut-vertex) is an edge or a cycle.
 \end{defn}

 \begin{figure}[!h]
 \centering
 \includegraphics[height = 3.3cm, width=5cm]{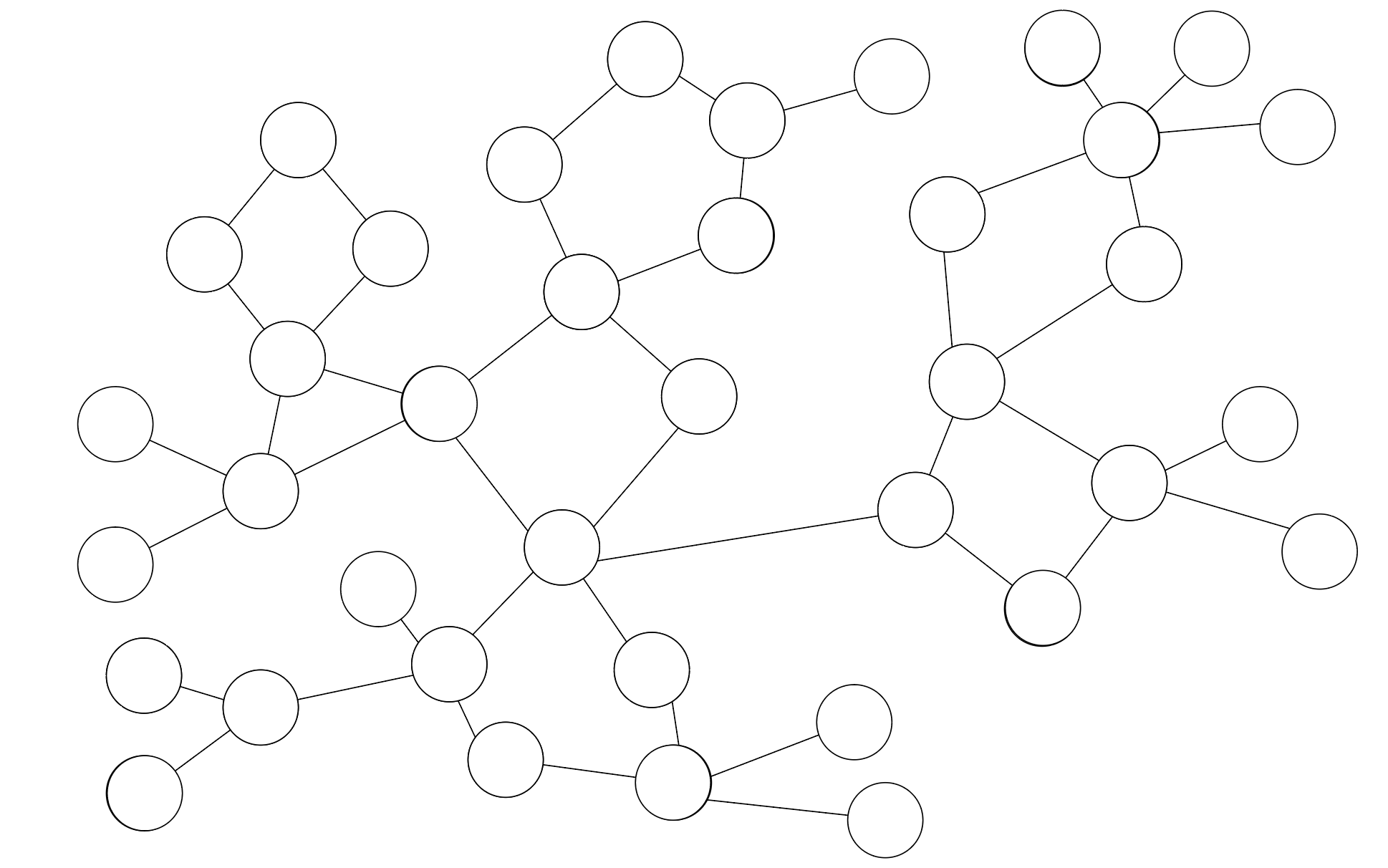}
  \caption{A Cactus Graph}
  \end{figure}
 
 \begin{obs}\label{charofcactugps}
If a graph $G$ is a cactus graph, then the following must hold:
\begin{itemize}
    \item[a.] If two adjacent vertices $v$ and $w$ each belonging to a cycle $C$ and $C'$ respectively in $G$, then no other two vertices $x,y \neq v,w$ such that $x$ is on $C$ and $y$ is on $C'$ can be adjacent. 
    \item[b.] If $v$ is on a cycle $C$ in $G$, then $v$ is adjacent to at most one vertex on all other cycles in $G$. 
    \item[c.] If a single cycle, $C$, contains a vertex $v$ adjacent to a vertex in a single tree $T$ then $v$ does not share neighbors in $T$ with any other vertex on $C$.
    \item[d.] If a single cycle $C$ contains a vertex $v$ adjacent to a vertex in a single tree $T$, then $v$ has no other neighbors in $T$,
\end{itemize}
\end{obs}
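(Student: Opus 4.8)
The statement to prove is Observation \ref{charofcactugps}, which lists four structural properties (a)--(d) that must hold for any cactus graph $G$. Each of these is a fairly direct consequence of the defining property of a cactus: any two simple cycles share at most one vertex, equivalently every edge lies in at most one simple cycle. The plan is to prove each item by contradiction, in each case producing a configuration that either forces two distinct cycles to share two vertices (or an edge), or creates a cycle inside what was supposed to be a tree block.

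\medskip

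The argument would proceed as follows. For item (a), I would suppose two adjacent vertices $v \in C$ and $w \in C'$ have a further pair $x \neq v,w$ on $C$ and $y \neq v,w$ on $C'$ with $x$ adjacent to $y$. Then the arc of $C$ from $v$ to $x$, the edge $xy$, the arc of $C'$ from $y$ to $w$, and the edge $wv$ together form a closed walk; extracting a simple cycle from it yields a cycle that uses the edge $xy$ which lies in no cycle of $C$ or $C'$, but more to the point this new cycle together with $C$ shares the arc (hence more than one vertex) of $C$, contradicting the cactus condition (I would spell out the sub-case where one must be slightly careful that the extracted simple cycle genuinely meets $C$ in at least two vertices — choosing $x$ to be a neighbor of $v$ on $C$ streamlines this, but even in general the arc from $v$ to $x$ lies on $C$ and has length $\ge 1$). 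For item (b), if $v$ on cycle $C$ were adjacent to two distinct vertices $y_1, y_2$ on some other cycle $C'$, then $v y_1$, the arc of $C'$ from $y_1$ to $y_2$, and $y_2 v$ form a cycle sharing the edge $\dots$ no — sharing at least two vertices $y_1,y_2$ with $C'$, again contradicting the cactus property. For items (c) and (d), the relevant fact is that a tree block $T$ hanging off the cycle contains no cycle at all: in (c), if $v$ and another cycle-vertex $v'$ on $C$ both had a common neighbor $t \in T$, then $v,v',t$ (together with an arc of $C$ from $v$ to $v'$) would close up into a cycle meeting $C$ in two vertices; in (d), if $v$ had two neighbors $t_1,t_2$ in $T$, then the path in $T$ between $t_1$ and $t_2$ closes with the edges $vt_1, vt_2$ into a cycle lying in a block that was assumed acyclic, a contradiction.

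\medskip

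I expect the only real subtlety — the ``main obstacle,'' though it is a mild one — is being careful about the distinction between closed walks and simple cycles when gluing arcs together, and making sure each constructed closed walk actually contains a \emph{simple} cycle that meets the original cycle $C$ (or $C'$) in at least two vertices, rather than degenerating. This is handled by always keeping one of the shared vertices and an honest arc of positive length from the original cycle inside the walk, so that a shortest-cycle / ear-type extraction still retains two vertices of $C$. Everything else is bookkeeping with the definitions of cycle, cut-vertex, and block, all of which are standard. I would therefore present the proof as four short paragraphs, one per item, each a two-or-three line argument by contradiction invoking "any two simple cycles of a cactus share at most one vertex" (or its block reformulation for (c) and (d)).
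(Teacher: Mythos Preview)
Your proposal is correct and follows essentially the same approach as the paper: each item is handled by contradiction, showing that a violation would produce a new cycle sharing an edge (equivalently, more than one vertex) with an existing cycle in cases (a)--(c), or a cycle inside a tree in case (d). The paper's proof is in fact a two-sentence sketch of exactly this argument, so your version is simply a more carefully fleshed-out rendering of the same idea.
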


\begin{proof}
Statements a through c are true because otherwise we would create another cycle that shares an edge with an existing cycle. Statement d is true because otherwise we would construct a cycle within a tree. 
\end{proof}
 
For the remainder of this section we assume that all cacti include at least two cycles, otherwise we may refer to the results obtained in the previous sections.

We begin with a consequence of Theorem \ref{thm:SubKrs} when applied to such cacti graphs.

\begin{lem}\label{cacti-lower}
    Suppose $G$ is a cactus graph on $n$ vertices with $n \geq 4$. Then
     $\operatorname{Z}(\overline{G}) \geq n - 4$. Further if $G$ does not contain a $C_4$, then $\operatorname{Z}(\overline{G}) \geq n - 3$.
\end{lem}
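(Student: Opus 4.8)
The statement is an application of Theorem~\ref{thm:SubKrs}, so the whole game is to identify the smallest complete bipartite subgraph that a cactus can contain. The plan is to show that a cactus graph $G$ never contains $K_{2,3}$ as a subgraph, and that it contains $K_{2,2}=C_4$ only when $G$ literally has a $4$-cycle block. Granting this, Theorem~\ref{thm:SubKrs} with $(r,s)=(2,3)$ gives $\operatorname{Z}(\overline{G}) \geq n - 2 - 3 + 1 = n-4$, and with $(r,s)=(2,2)$ in the $C_4$-free case gives $\operatorname{Z}(\overline{G}) \geq n - 2 - 2 + 1 = n-3$. (One should note the hypothesis $n\geq 4$ is what makes $r+s\leq n$ legitimate in the first bound, and $C_4$-free cacti on $\geq 4$ vertices still satisfy $r+s=4\leq n$.)

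\textbf{Key steps.} First I would record the elementary fact that $K_{2,2}$ as a subgraph is exactly a $4$-cycle (four vertices, with a perfect matching between the two sides giving the cycle $a\,x\,b\,y\,a$), and $K_{2,3}$ as a subgraph contains three internally disjoint paths of length $2$ between a fixed pair of vertices, hence two edge-disjoint $4$-cycles sharing the two "hub" vertices — in particular sharing an \emph{edge}'s worth of structure, namely two common vertices lying on both $4$-cycles. Then the argument is: in a cactus, any two simple cycles share at most one vertex, so two distinct $4$-cycles can never share two vertices; thus $K_{2,3}\not\subseteq G$. This immediately yields $\operatorname{Z}(\overline{G})\geq n-4$. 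For the second assertion, suppose $G$ is $C_4$-free but (for contradiction) contains $K_{2,2}$ as a subgraph; that subgraph \emph{is} a $C_4$ in $G$ on those four vertices, and since a cactus's blocks are edges or cycles and every edge lies in at most one cycle, this $4$-cycle is itself a cycle in $G$ — but "$C_4$-free" in the lemma should be read as "$G$ has no $4$-cycle as a subgraph", contradicting its presence. Hence $K_{2,2}\not\subseteq G$, and Theorem~\ref{thm:SubKrs} with $r=s=2$ gives $\operatorname{Z}(\overline{G})\geq n-3$.

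\textbf{Main obstacle.} The only real subtlety is the bookkeeping around what "$K_{r,s}$ as a subgraph" means versus "as an induced subgraph", and correspondingly what "contains a $C_4$" means — subgraph versus induced subgraph. Since Theorem~\ref{thm:SubKrs} is stated for (not-necessarily-induced) subgraphs, I must argue $K_{2,3}$ is absent as a subgraph, i.e. even allowing chords. That is where the cactus property does the work: the potential chords of a would-be $K_{2,3}$ (e.g. an edge between two of the degree-$2$ side vertices) would create a short cycle sharing an edge with one of the $4$-cycles through the hubs, again violating the block structure. I would spell out that in a cactus every edge belongs to at most one simple cycle (part of the definition given), which is the clean hammer: $K_{2,3}$ has an edge (in fact every edge) lying on two distinct $4$-cycles, impossible. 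So the plan is to phrase the whole lemma as "$K_{2,3}$ forces an edge on two cycles" plus "$K_{2,2}$ \emph{is} a $C_4$", then quote Theorem~\ref{thm:SubKrs}; no computation is needed beyond the two arithmetic identities $n-2-3+1=n-4$ and $n-2-2+1=n-3$.
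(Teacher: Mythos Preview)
Your proposal is correct and follows essentially the same approach as the paper: apply Theorem~\ref{thm:SubKrs} with $(r,s)=(2,3)$ in general and $(r,s)=(2,2)$ in the $C_4$-free case. In fact you supply more than the paper does --- the paper simply asserts that a cactus cannot contain $K_{2,3}$ and that $C_4$-free means $K_{2,2}$-free, whereas you actually justify the first claim via the ``every edge lies on at most one cycle'' formulation of the cactus definition, and you check the $r+s\le n$ hypothesis.
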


\begin{proof}
    The first inequality follows directly from Theorem \ref{thm:SubKrs} since any cactus graph cannot contain $K_{2,3}$ as a subgraph. The second inequality then follows from Theorem \ref{thm:SubKrs} since it is assumed that such cacti do not contain $K_{2,2}$.
\end{proof}

Using similar ideas to that in Section 3, we need to separate the cases when $G$ may or may not contain a $C_4$. Let us first consider the cactus graphs which do not contain a $C_4$.

\begin{figure}[!h]
    \centering
    \includegraphics[scale=.8]{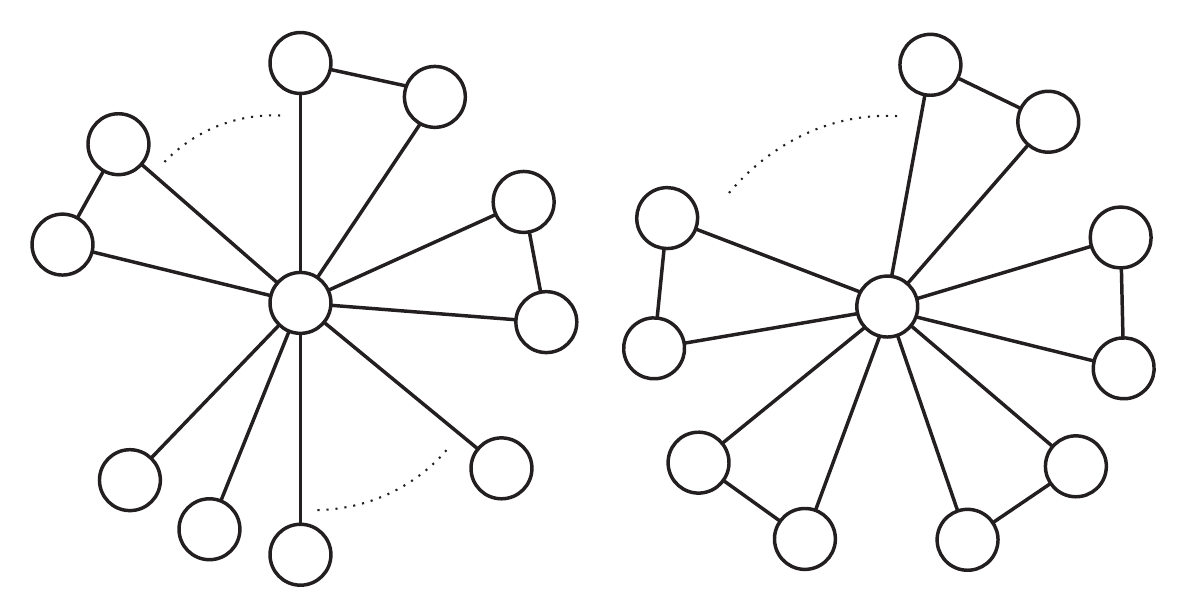}
    \caption{The star graph with at least two edges added between non-consecutive pairs of leaves.}
    \label{fig:book}
\end{figure}

\begin{prop}
 Suppose $G$ is a cactus graph (with at least two cycles) on $n$ vertices that does not contain  a $C_4$. Then,
 $\operatorname{Z}(\overline{G}) = n - 3$ or $G$ is one of the graphs in Figure \ref{fig:book} and $\operatorname{Z}(\overline{G}) = n - 2$ .
 \end{prop}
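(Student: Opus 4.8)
The plan is to determine $\operatorname{Z}(\overline{G})$ by combining the lower bound from Lemma~\ref{cacti-lower} with an analysis of when $\overline{G}$ can contain a $P_4$ as an induced subgraph. Since $G$ does not contain $C_4$, Lemma~\ref{cacti-lower} gives $\operatorname{Z}(\overline{G}) \geq n-3$. So the entire task reduces to deciding when $\operatorname{Z}(\overline{G}) = n-3$ versus $\operatorname{Z}(\overline{G}) = n-2$, and—given Remark~\ref{ZFUnicyclicBounds}-style reasoning together with the fact that the only graph with zero forcing number $n-1$ is $K_n$ (which is not the complement of a cactus with two cycles on $n \geq 4$ vertices)—these are the only two options. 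By Lemma~\ref{InducedSubgraphs}, $\operatorname{Z}(\overline{G}) = n-2$ forces $\overline{G}$ to avoid $P_4$, $P_2 \cup P_2 \cup P_2$, $P_3 \cup P_2$, $\ltimes$, and the dart as induced subgraphs; conversely if $\overline{G}$ contains any of these, then $\operatorname{Z}(\overline{G}) \le n-3$, hence equality with the lower bound. So the first step is: \emph{if $G$ contains an induced $P_4$ (equivalently $\overline{G}$ does), then $\operatorname{Z}(\overline{G}) = n-3$.}

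The second step is to classify the cacti $G$ (with at least two cycles, no $C_4$) that are \emph{$P_4$-free}, i.e.\ cographs. The key structural input is Observation~\ref{charofcactugps}: cycles share at most one vertex, a vertex on a cycle has limited adjacency to other cycles and to attached trees. I would argue that a $P_4$-free cactus with two cycles must have all its cycles be triangles sharing a common vertex, with no further structure: any cycle of length $\geq 5$ already contains an induced $P_4$; since $C_4$ is excluded, all cycles are $C_3$; two triangles not sharing a vertex are joined by a path, which (having length $\geq 1$ between them) yields an induced $P_4$ using two triangle-edges and the connecting path; a pendant vertex attached to a vertex $v$ of a triangle at distance $\geq 1$ from another structure again produces an induced $P_4$. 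Pushing this through, the only $P_4$-free cacti with $\ge 2$ cycles are exactly the "friendship-type" graphs: a central vertex $v_0$ together with $k \geq 2$ triangles all through $v_0$, possibly with some pendant edges at $v_0$—which is precisely the family depicted in Figure~\ref{fig:book} (a star with at least two edges added between non-consecutive pairs of leaves). For these graphs one computes $\overline{G}$ directly: the central vertex becomes isolated and the rest forms a specific graph whose zero forcing number is $n-3$ as a subgraph of $K_{n-1}$ missing a perfect-matching-like set of edges, so that $\operatorname{Z}(\overline{G}) = (n-3)+1 = n-2$.

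The third step is the bookkeeping: verify that for every $G$ in Figure~\ref{fig:book}, $\operatorname{Z}(\overline{G}) = n-2$ by exhibiting an explicit zero forcing set of size $n-2$ and invoking the lower bound $\operatorname{Z}(\overline{G}) \ge n-3$ together with the observation that the extra forcing chain through the isolated vertex pushes it to $n-2$; and conversely verify that no $P_4$-free cactus with $\geq 2$ cycles and no $C_4$ falls outside this family. One must also double-check the degenerate small cases ($n$ small, e.g.\ two triangles sharing a vertex has $n=5$) directly.

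The main obstacle I expect is the structural classification in the second step: carefully ruling out all $P_4$-free cacti except the Figure~\ref{fig:book} family. The subtlety is that $P_4$-freeness is quite restrictive but one must handle pendant trees, multiple cycles attached at different cut-vertices, and cycles of various lengths uniformly; the cleanest route is probably to show that in a connected $P_4$-free graph every induced path has length $\le 2$, so the graph has diameter $\le 2$ on each component, and then combine this diameter constraint with the cactus block structure (blocks are edges or cycles, cycles pairwise share $\le 1$ vertex) to force all cycles to be triangles through one common cut-vertex with only pendant edges otherwise. Alternatively, one can use the cograph characterization directly—$G$ is built by unions and joins of smaller cographs—and argue that a connected join decomposition of a cactus with two cycles must have the friendship structure. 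Getting this argument airtight, rather than hand-wavy, is where the real work lies.
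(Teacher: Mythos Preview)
Your approach is correct and genuinely different from the paper's. The paper proceeds by direct construction of zero forcing sets for $\overline{G}$: it splits into the case where some vertex on a cycle of $G$ has degree $2$ (and then further into whether that cycle is a triangle or has length $\geq 5$, and whether $\overline{G}$ is connected), versus the case where every cycle vertex has degree $\geq 3$; in each case an explicit set of size $n-3$ is exhibited, except when $\overline{G}$ is disconnected, which forces a vertex of degree $n-1$ in $G$ and hence the Figure~\ref{fig:book} family. Your route instead leverages Lemma~\ref{InducedSubgraphs} via the self-complementarity of $P_4$: you reduce the whole problem to classifying $P_4$-free ($=$ cograph) cacti with at least two cycles and no $C_4$, argue these are exactly the friendship-type graphs of Figure~\ref{fig:book}, and then compute $\operatorname{Z}(\overline{G})=n-2$ for that family directly. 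Your method is more structural and explains \emph{why} the exceptional family is what it is (it is the unique cograph family among such cacti), whereas the paper's argument is more elementary and self-contained, needing no cograph theory.

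Two small points to tighten. First, your passing remark that ``the only graph with zero forcing number $n-1$ is $K_n$'' is false for disconnected graphs (e.g.\ $K_{n-1}\cup K_1$), and $\overline{G}$ is indeed disconnected for the Figure~\ref{fig:book} family; fortunately your argument does not actually rely on this, since you compute $\operatorname{Z}(\overline{G})=n-2$ directly for that family. Second, your sketch of the $P_4$-free classification is right but, as you note, needs care: you should also explicitly rule out pendants attached at a non-central triangle vertex (which do create an induced $P_4$ using a vertex from a second triangle) and check that three triangles pairwise sharing vertices must all share the \emph{same} vertex (otherwise one finds an induced $P_4$ across two of them). Once those details are written out, the classification is complete.
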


 \begin{proof} First, suppose that $G$ has at least one vertex of degree 2 incident to a cycle in $G$ and let $z$ be such a vertex. Observe that $z$ has degree $n-3$ in $\overline{G}$. Consider a zero forcing set $B$ which contains $z$ and all but one of its neighbor in $\overline{G}$. The neighbor of $z$ which is not in $B$ will necessarily be forced by $z$. Let the two remaining vertices be $x$ and $y$ and observe that they are the neighbors of $z$ on a cycle in $G$. 

Suppose the cycle containing $z$, $x$ and $y$ has length five or more. Then there exists a vertex $v$ which is adjacent to $x$ and not adjacent to $y$ in $G$ (or vice-versa). In either case $v$ belongs to $B$ and can force $y$ (or $x$). From which it follows that  $\operatorname{Z}(\overline{G})\leq n-3$. Using Lemma \ref{cacti-lower} we have that  $\operatorname{Z}(\overline{G})= n-3$.

On the other hand if the cycle containing  $z$, $x$ and $y$ is a triangle, then we have two possible situations arise. The first is that  $\overline{G}$ is connected. In this case we can apply a similar argument as above and conclude that  $\operatorname{Z}(\overline{G})= n-3$. Otherwise, $\overline{G}$ is disconnected. Since we have assumed that $G$ contains at least two cycles, it follows that $\overline{G}$ is disconnected only if $G$ contains a vertex of degree $n-1$ (which must be either $x$ or $y$). In this case $G$ is one of the graphs in Figure \ref{fig:book} and it is straightforward to check that $\operatorname{Z}(\overline{G})= n-2$ in this case. 

Next, suppose there exists no vertex of degree 2 incident to a cycle in $G$. This implies that all vertices on a cycle $C$ in $G$ have at least one neighbor that is not $C$. Using Lemma \ref{cacti-lower} it is sufficient to exhibit a zero forcing set of size $n-3$. Following the same line of reasoning from above, assume $w$ is a vertex on $C$ adjacent to vertices $x$ and $y$ both on $C$. Assume that $w'$ and $x'$ are neighbors of $w$ and $x$ not on $C$, respectively. Now consider the subset $B=V \setminus \{x,y,x'\}$ initially colored in $\overline{G}$. Then, in $\overline{G}$, it follows that $w$ can force $x'$, followed by $x'$ forcing $y$, and then $w'$ can force $x$. Hence $B$ is a zero forcing set for $\overline{G}$ and $\operatorname{Z}(\overline{G})= n-3$. 
\end{proof}

The next result presents a general upper bound on $Z(\overline{G})$ when the cactus graph $G$ contains a $C_4$.

\begin{prop}\label{C4noadjvertswithneigbhorsupperbound}
If $G$ is a cactus graph on $n$ vertices containing a $C_4$, then $Z(\overline{G}) \leq n - 3$. 
\end{prop}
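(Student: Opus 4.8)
The plan is to exhibit a zero forcing set of size $n-4$ for $\overline{G}$ whenever $G$ is a cactus containing a $C_4$; combined with the (trivial) fact that any set of $n-3$ vertices needs at least... wait, that gives the wrong bound. Let me restate: I want to exhibit a zero forcing set of size $n-3$ for $\overline{G}$, which immediately gives $\operatorname{Z}(\overline{G}) \le n-3$. Fix a $4$-cycle $C$ in $G$ with vertices labelled consecutively $a,b,c,d$. Since $G$ is a cactus, $C$ is an induced subgraph (two chords of a $4$-cycle would force two cycles sharing an edge), so in $\overline{G}$ the set $\{a,b,c,d\}$ induces exactly the two edges $ac$ and $bd$. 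The strategy is to leave white a carefully chosen three-element subset $W$ consisting of vertices drawn from $\{a,b,c,d\}$ (and possibly one neighbour of a cycle vertex lying off $C$), set $B = V \setminus W$, and then run the color change rule: a cycle vertex whose only white neighbour in $\overline{G}$ is its "diagonal partner" can force that partner, after which the newly-blue vertex — which in $G$ is adjacent to relatively few vertices — can continue the chain.

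Concretely, I would first handle the case where at least one vertex of $C$, say $c$, has degree $2$ in $G$ (so $c$'s only neighbours are $b$ and $d$). Then in $\overline{G}$, $c$ is non-adjacent only to $b$ and $d$. Take $W = \{b,c,d\}$, so $B = V \setminus \{b,c,d\}$. Hmm — but then no vertex of $B$ has $c$ as a neighbour's issue; I would instead want $c$ blue. Let me take $W$ to be, e.g., $\{\alpha, b, d\}$ where $\alpha$ is an off-cycle neighbour of $a$: then $a$ (blue) sees in $\overline{G}$ the white vertices among $\{b,d,\text{(others)}\}$; since $a \sim b$, $a\sim d$ in $G$ they are not $\overline{G}$-neighbours of $a$, so $a$'s only white $\overline{G}$-neighbour is $\alpha$ (provided $\alpha$ is not $G$-adjacent to $a$ — contradiction, $\alpha$ is a neighbour of $a$!). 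I need to be more careful: the white vertex to be forced should be one that IS $G$-adjacent to the forcing vertex. The correct template, mirroring the proof of Theorem~\ref{mega_n-4}(1), is: choose $W = \{\alpha, b, c, d\}$ minus one vertex to get down to size... no — size $n-3$ means $|W| = 3$. So I would take $W = \{b, c, d\}$ and note $a$ forces $c$ in $\overline{G}$ (since $b,d$ are $G$-neighbours of $a$ hence not white $\overline{G}$-neighbours, and $c$ is not $G$-adjacent to $a$), then $c$ (now blue) forces... $c$'s white $\overline{G}$-neighbours are $\{b,d\}$ minus $G$-neighbours of $c$; if $c$ has degree $2$ then $b,d$ are exactly $c$'s $G$-neighbours, so $c$ has no white $\overline{G}$-neighbours and the chain stalls. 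So instead after $a$ forces $c$, look for any blue vertex with a single white $\overline{G}$-neighbour among $\{b,d\}$: e.g. the off-cycle neighbour $\alpha$ of $a$ (which is blue, is $G$-non-adjacent to $b$ and to $d$ by the cactus property Observation~\ref{charofcactugps}), so $\alpha$ is $\overline{G}$-adjacent to both $b$ and $d$ — two white neighbours, no good. I would instead use a neighbour of $c$: but $c$ may have no off-cycle neighbour. The clean resolution is to case-split on the degrees of $a,b,c,d$ exactly as in Theorem~\ref{mega_n-4}, reusing those very zero forcing sets (which are proved there to work using only the $C_4$ and its immediate off-cycle neighbours, and the non-adjacencies guaranteed for \emph{unicyclic} graphs — I must check these non-adjacencies still hold in a cactus, which they do by Observation~\ref{charofcactugps}).

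So the real structure of the proof I propose is: (i) reduce to exhibiting an explicit zero forcing set of size $n-3$ in $\overline{G}$; (ii) pick an induced $C_4$ with vertices $a,b,c,d$; (iii) case-split on how many of $a,b,c,d$ have degree $2$ in $G$ and whether the degree-$2$ ones are adjacent, mirroring the four configurations of Theorem~\ref{mega_n-4}; (iv) in each case write down $W$ (three vertices: typically two consecutive cycle vertices plus one off-cycle neighbour, or two opposite cycle vertices plus one off-cycle neighbour, or in the all-degree-$\ge 3$ case simply $\{\alpha,c,d\}$ or similar) and verify the forcing chain, where each verification uses only: the two $\overline{G}$-edges inside $C$ are $ac, bd$; each off-cycle neighbour of a cycle vertex is $G$-non-adjacent to the other three cycle vertices (Observation~\ref{charofcactugps}(b,d)); and distinct off-cycle neighbours attached at distinct cycle vertices are handled so as not to create a second $C_4$. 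The main obstacle I anticipate is case~(iv) when \emph{three or four} vertices of $C$ have degree $2$: then there are almost no off-cycle neighbours attached to $C$ to seed the forcing, and the chain that worked for unicyclic graphs in Theorem~\ref{mega_n-4}(3)/(2b) there only gave $n-3$, not $n-4$ — but here we only need $n-3$, so it should be recoverable by instead using off-cycle structure attached elsewhere in the cactus, or by arguing the complement has small diameter; pinning down that last sub-case cleanly is where I expect to spend the most effort, and it may be easiest to invoke Lemma~\ref{InducedSubgraphs} by showing $\overline{G}$ contains an induced $P_4$ (e.g. $b$–$a$–$c$–$v$ where $v$ is any vertex outside $C$ not equal to the off-cycle neighbours, using $n\ge 5$ and connectivity) and hence $\operatorname{Z}(\overline{G}) \le |G| - 3$ directly.
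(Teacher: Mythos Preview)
Your proposal reads as exploratory notes rather than a proof, and neither of your two strategies matches the paper's.

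The paper avoids any degree-based case split. Writing the $4$-cycle as $v$--$x$--$y$--$w$--$v$, it observes (via Observation~\ref{charofcactugps}) that every off-cycle vertex is $G$-adjacent to at most one of $v,x,y,w$, and partitions the off-cycle vertices accordingly into sets $H_v,H_x,H_y,H_w,H$. Since $G$ is connected with at least two cycles, some $H_\bullet$ is nonempty; relabel so that $H_v\cup H_y\neq\emptyset$. The white set is then $\{v,y,u\}$ for any single $\overline{G}$-neighbour $u$ of $x$. Now $x$ forces $u$ (as $v,y$ are $G$-neighbours of $x$ on the cycle, hence not $\overline{G}$-neighbours), and afterwards any vertex in $H_v$ (resp.\ $H_y$) has $y$ (resp.\ $v$) as its unique white $\overline{G}$-neighbour and forces it; the last white vertex is then forced. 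One construction handles every degree configuration, including the ``three vertices of degree~$2$'' case you flagged as hardest: with only $a$ of degree $\geq 3$, take $x=b$ and $W=\{a,c,d\}$, and the chain is $b\to d$, $\alpha\to c$, $c\to a$.

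Your fallback via Lemma~\ref{InducedSubgraphs} is in fact a cleaner alternative than either your case split or the paper's construction: the proof of Proposition~\ref{InducedP4} carries over verbatim to cacti, since the non-adjacencies it uses are exactly those guaranteed by Observation~\ref{charofcactugps}, so $G$ (and hence $\overline G$, since $\overline{P_4}=P_4$) contains an induced $P_4$ whenever $n\ge 5$. However, the specific path you wrote down, $b$--$a$--$c$--$v$ in $\overline G$, is wrong: $a$ and $b$ are consecutive on the cycle, so $ab\notin E(\overline G)$. A correct choice is $b$--$c$--$d$--$u$ in $G$ for any off-cycle $G$-neighbour $u$ of $d$ (equivalently $c$--$u$--$b$--$d$ in $\overline G$).
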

\begin{proof}
Let $C_4$ be a cycle with vertices $v$, $x$, $w$, and $y$ with edges $vx$, $xy$, $yw$, and $wv$. Note that every vertex $z$ in $G$ such that $z \neq v, x, w, y$ is adjacent to at most one vertex on $C_4$ by Observation \ref{charofcactugps}. So, in $\overline{G}$, all vertices $z \neq v, x, w, y$ are adjacent to three or all of $v$, $x$, $w$, and $y$ while the adjacencies of the vertices formally on $C_4$ are $vy$ and $xw$. Note that the vertices in $\overline{G}$ can be partitioned into sets $H$, $H_y$, $H_w$, $H_x$, and $H_v$ where $H$ consists of the vertices that are adjacent to all $v$, $x$, $w$, and $y$, $H_y$ consists of all adjacent to $v$, $x$, $w$, $H_w$ consists of all adjacent to $v$, $x$, $y$, $H_x$ consists of all adjacent to $v$, $w$, $y$, and $H_v$ consists of all adjacent to $x$, $w$, $y$. Without loss of generality, leave one neighbor of $x$ white and color $x$ along with all other of its neighbors blue. Also, leave $v$ and $y$ white as well. The white neighbor of $x$ is immediately forced and all that remains white is $v$ and $y$. Any vertex in set $H_y$ will force $v$ and any vertex in $H_v$ will force $y$. Any neighbor will force the remaining white vertex. Note that at least one of $H_y$ or $H_v$ is nonempty otherwise we can use another vertex on the $C_4$ in $G$ in the place of $x$. If no vertices on the $C_4$ in $G$ have neighbors not on the $C_4$, then $G = C_4$. Either way, $\operatorname{Z}(\overline{G})\le n-3$. 
\end{proof}


To analyze the lower bound of $Z(\overline{G})$ for cactus graphs $G$ containing a $C_4$, we focus on examining adjacencies of those vertices on $C_4$.

\begin{prop}\label{C4noadjvertswithneigbhorslowerbound}
If $G$ is a cactus graph on $n$ vertices containing a $C_4$ such that no two adjacent vertices on any $C_4$ have a neighbor not on that $C_4$, then $n - 3 \leq Z(\overline{G})$.
\end{prop}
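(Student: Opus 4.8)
The plan is to show that no subset $B \subseteq V(\overline{G})$ with $|B| = n-4$ can be a zero forcing set for $\overline{G}$, which combined with the trivial bound from Lemma~\ref{cacti-lower} gives equality at $n-3$ (actually only the lower inequality $Z(\overline{G}) \geq n-3$ is claimed). Fix a particular $C_4$ in $G$ with consecutive vertices $a,b,c,d$; the hypothesis says that among any two adjacent vertices of this $C_4$, at most one has a neighbor off the cycle, so the vertices of $C_4$ with off-cycle neighbors form an independent set in $C_4$ — hence either at most one vertex of $\{a,b,c,d\}$ has an off-cycle neighbor, or exactly two opposite ones do (say $a$ and $c$). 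In $\overline{G}$, the $C_4$ becomes two disjoint edges $ac$ and $bd$, and by Observation~\ref{charofcactugps} every vertex off the $C_4$ is adjacent in $\overline{G}$ to at least three of $a,b,c,d$. I would organize the argument around the forests $F_a$ and $F_c$ hanging off $a$ and $c$ (when the opposite-pair case holds) exactly as in the proof of Theorem~\ref{mega_n-4}(2b), and treat the degenerate case (at most one cycle vertex with off-cycle neighbors, i.e. essentially $G=C_4$ with at most a tree on one vertex) separately and easily.

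The core of the proof is a case analysis on $k := |(V(F_a) \cup V(F_c)) \setminus B|$, the number of initially white vertices lying in the two forests, for $k = 0,1,2,3,4$ (since $|W| = 4$). When $k=0$ all four white vertices are $\{a,b,c,d\}$ and in $\overline{G}$ each has at most one white neighbor among them but also has white neighbors outside (the forest vertices of the \emph{other} forest are white-adjacent... wait — they are blue here), so I would check that $a,b,c,d$ each still have $\geq 2$ white neighbors and no force starts. For $k=1,2,3$ the key recurring phenomenon, borrowed directly from the Theorem~\ref{mega_n-4}(2b) argument, is that since in $\overline{G}$ every vertex of $F_a$ is adjacent to every vertex of $F_c$, a forest vertex can only force if all but one of the forest-white-vertices lie in the \emph{other} forest; pushing this through forces the existence of two vertices in $F_a$ (or $F_c$) each adjacent in $G$ to the same pair of $F_c$-vertices, producing a second $4$-cycle sharing no vertex structure compatibly with the cactus condition — a contradiction with Observation~\ref{charofcactugps}. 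For $k=4$, all white vertices are in $F_a \cup F_c$; since $F_a$ and $F_c$ are completely joined in $\overline{G}$, at least one forest, say $F_a$, has at most one white vertex, and the same "two forcers, one extra $4$-cycle" contradiction applies to $F_c$'s three white vertices.

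The main obstacle I anticipate is handling the subcases where one of the forced vertices is a \emph{cycle} vertex $a$ or $c$ (not a forest vertex) and tracking precisely which off-cycle vertices are adjacent to which cycle vertex in $G$ — a vertex in $F_a$ is adjacent to $a$ in $G$ hence nonadjacent to $a$ in $\overline{G}$, but it may or may not be adjacent to $b,c,d$, and Observation~\ref{charofcactugps}(b) limits this. I would need to be careful that "$x$ forces $a$ in $\overline{G}$" together with the remaining white set being inside $F_c$ really does force $x$ to be adjacent in $G$ to a full pair of $F_c$-vertices, yielding the forbidden configuration; the cactus constraints (any two cycles share at most one vertex, and parts (a)--(d) of Observation~\ref{charofcactugps}) are exactly what rules these out, so the write-up must invoke them explicitly rather than hand-wave "contains two $4$-cycles." Finally, the degenerate case — the fixed $C_4$ has at most one vertex with off-cycle neighbors — means $n - 4 \leq$ (vertices of that one forest) $+$ at most $3$, and one checks directly (as in the small-$n$ base cases and the $C_4$ computation $Z(\overline{C_4})=2$) that an $(n-4)$-set cannot force; alternatively, if \emph{some} $C_4$ in $G$ has two opposite vertices with off-cycle neighbors, apply the main argument to that $C_4$, and if \emph{no} $C_4$ does, a short separate check finishes it. I would present the opposite-pair case in full detail mirroring Theorem~\ref{mega_n-4}(2b), and dispatch the remaining configurations by the same template with the observation that fewer off-cycle neighbors only makes forcing harder.
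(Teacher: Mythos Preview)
Your approach differs substantially from the paper's, and contains a gap in the contradiction step.

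The paper's proof does not fix any particular $C_4$ or decompose $G$ relative to one. Instead it proceeds in the style of Theorem~\ref{thm:SubKrs}: assume a zero forcing set of size $n-4$ for $\overline{G}$, trace the first two or three forces, and observe that the forcers $z_1, z_2$ together with the two last white vertices $u_3, u_4$ form a $K_{2,2}$ (hence a $C_4$) in $G$. One then notes that $z_1$ already has the off-cycle neighbor $u_2$, and that the third force supplies an off-cycle neighbor for a vertex adjacent to $z_1$ on that $C_4$, contradicting the hypothesis directly. One small sub-case ($z_2 = u_1$ and $z_3 = u_2$) instead yields two simple cycles sharing an edge, contradicting the cactus condition. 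The whole argument takes a few paragraphs and involves no case analysis on where the white vertices sit.

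Your plan instead fixes a specific $C_4$, partitions $V(G)$ as $\{a,b,c,d\} \cup F_a \cup F_c$, and does a five-way case split on the number of white vertices in $F_a \cup F_c$, mirroring Theorem~\ref{mega_n-4}(2b). Two issues arise. First, a minor one: $F_a$ and $F_c$ need not be forests in a cactus; they may themselves contain cycles. This is cosmetic but signals that the unicyclic template is being imported too literally.

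Second, and this is the real gap: you write that the forced adjacencies produce ``a second $4$-cycle sharing no vertex structure compatibly with the cactus condition --- a contradiction with Observation~\ref{charofcactugps}.'' But in a cactus a second $4$-cycle is \emph{not} a contradiction by itself; cacti may have many cycles. In the unicyclic proof the phrase ``hence $G$ contains two $4$-cycles'' finishes the job, but here it does not. If you trace your own $k=4$ scenario (forcers $w, w'$ and white $x,y,z \in F_c$), the $4$-cycle $w\,y\,w'\,z$ sits perfectly legally inside the cactus. The actual contradiction is that $w$ has off-cycle neighbor $x$, and the \emph{third} forcer supplies an off-cycle neighbor for $z$ (which is adjacent to $w$ on this new $C_4$), violating the \emph{hypothesis of the proposition}, not Observation~\ref{charofcactugps}. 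Your write-up must track one more force and invoke the hypothesis rather than the cactus axiom. Once this is corrected your approach goes through, but the fixed-$C_4$ decomposition and five-case split end up doing no real work: inside each case you are reproducing the paper's direct force-tracing argument.
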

\begin{proof}
Assume there is a zero forcing set of size $n - 4$, say $Z$. Such a set $Z$ of $n - 4$ colored vertices leaves four vertices $u_1$, $u_2$, $u_3$, $u_4$ of $G$ uncolored. If $Z$ is a zero forcing set, then there must be a vertex $z_1$ so that $z_1$ is adjacent to exactly one of the white vertices, say $u_1$ otherwise no forcing can be done in the graph. So, $z_1$ will force $u_1$ as in the Figure \ref{fig:4.6Step1}. 

\begin{figure}[!h]
    \centering
\includegraphics[height = 4cm, width = 8cm]{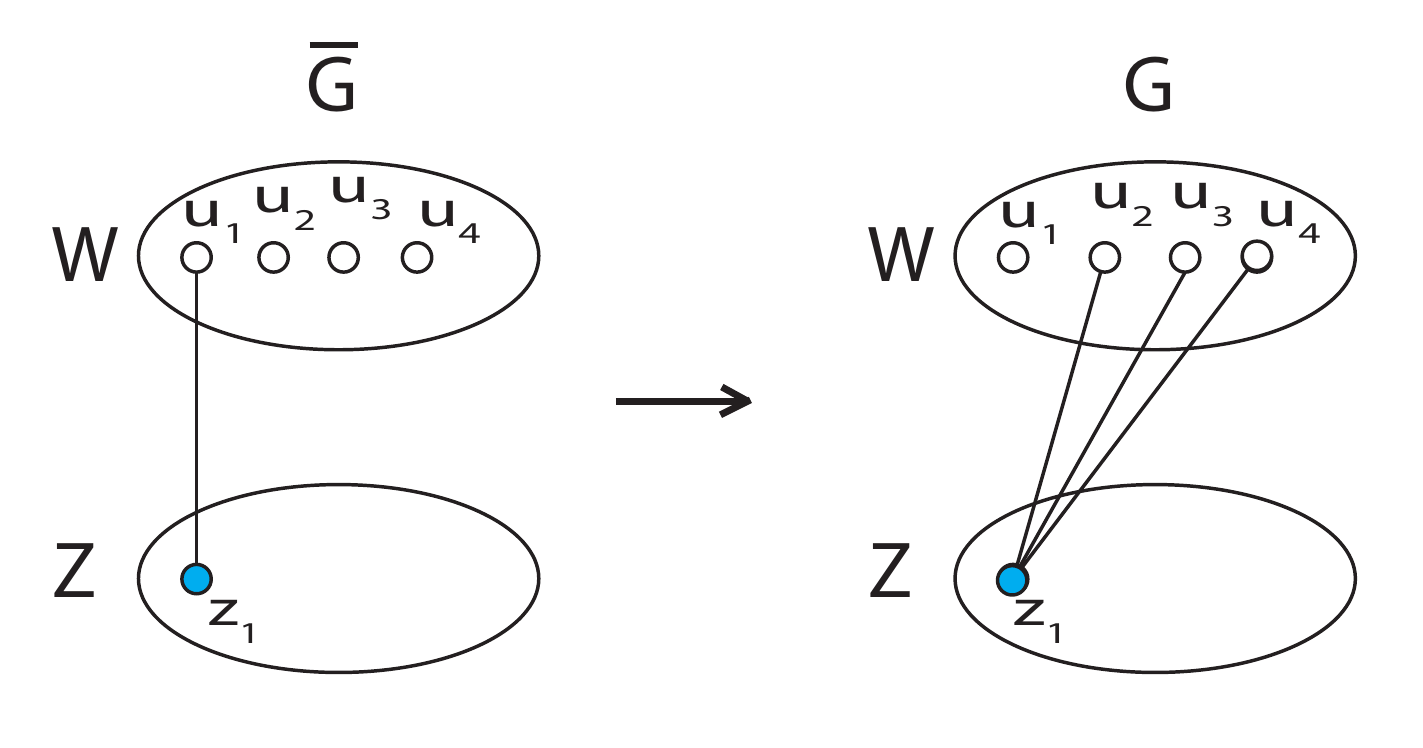}
    \caption{$z_1$ forces $u_1$ in $G$}
    \label{fig:4.6Step1}
\end{figure}

At the next stage, because $z_1$ can only force one vertex. There must be another vertex $z_2$ in the graph so that $z_2$ forces another white vertex, say $u_2$. 
There are two cases to consider. 

Case 1: $z_2 \in Z\setminus \{z_1\}$.
In this case we have the situation as described in Figure \ref{fig:4.6Case1}. 

\begin{figure}[!h]
    \centering
\includegraphics[height = 5cm, width = 8cm]{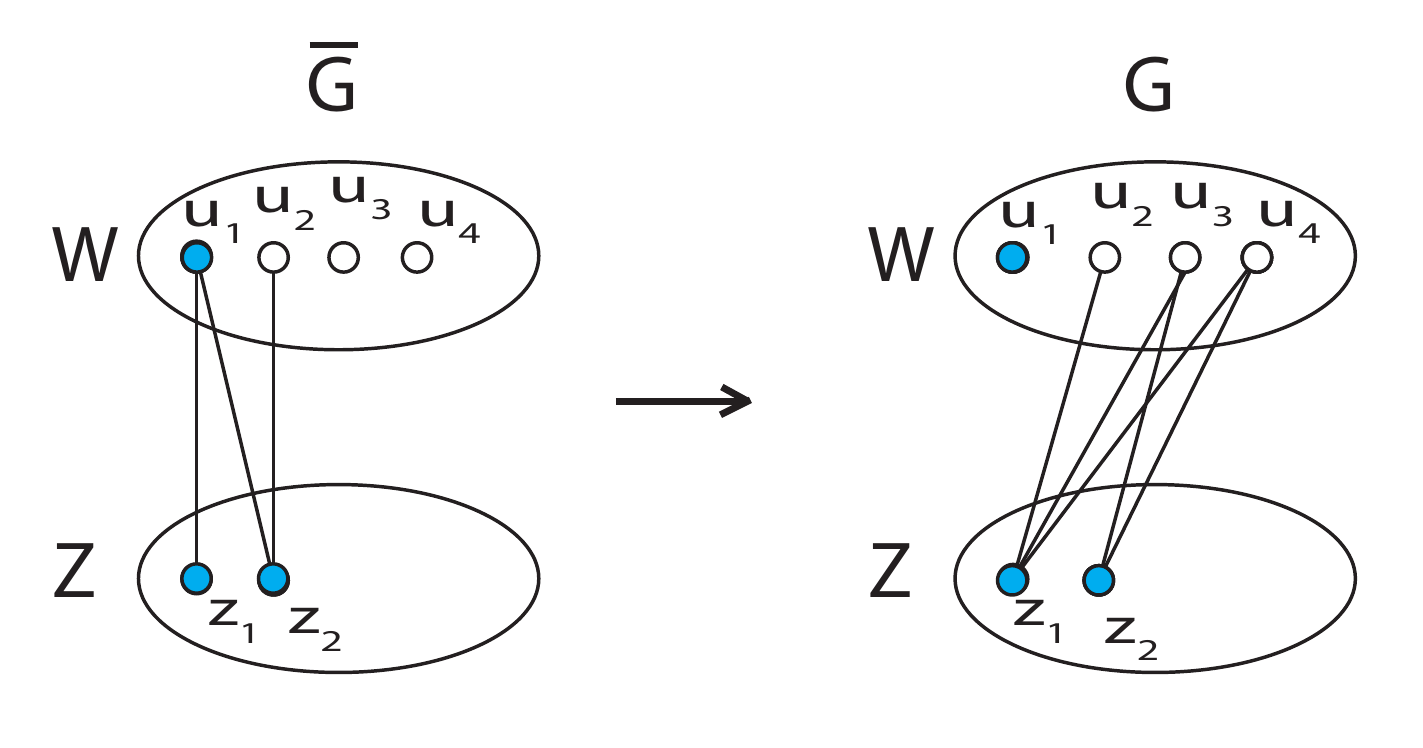}
\includegraphics[height = 5cm, width = 5cm]{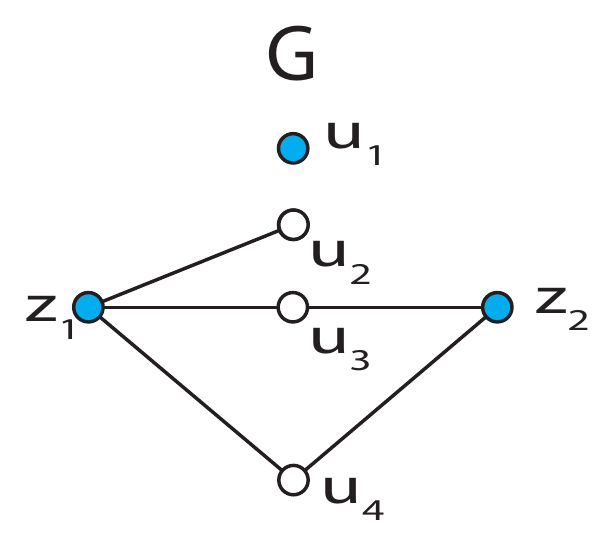}
    \caption{Case 1}
    \label{fig:4.6Case1}
\end{figure}

There must exists a third vertex $z_3$ that performs a force. Observe that $z_3$ cannot be $u_1$ nor $u_2$ as this contradicts our assumption that no two adjacent vertices on a $C_4$ can have a neighbour not on $C_4$. Hence $z_3 \in Z\setminus \{z_1, z_2\}$. In this case if $z_3$ forces either $u_3$ or $u_4$ this leads to a contradiction as the four cycle that contains $z_1, u_3, u_4$, and $z_2$ has adjacent vertices (namely $z_1$ and $u_4$) with neighbors not on that four cycle.

Case 2: $z_2$ is in $W$. In this case it must be that $z_2=w_1$. Assume that $w_1$ force, say $w_2$ (the picture is similar to Figure \ref{fig:4.6Case1}). As in Case 1, when we consider the third force, we distinguish between $z_3=w_2$ or not. If $z_3=w_2$, then we arrive at a contradiction as $G$ was assumed to be cactus. Otherwise, $z_3 \in Z\setminus \{z_1\}$ which leads to a contradiction of our assumption about all four cycles in $G$. 
\end{proof}

\begin{cor}
If $G$ is a cactus graph on $n$ vertices containing a $C_4$ such that no two adjacent vertices on any $C_4$ each have a neighbor not on $C_4$, then $Z(\overline{G}) = n - 3$. 
\end{cor}
\begin{proof}
By Propositions \ref{C4noadjvertswithneigbhorslowerbound} and \ref{C4noadjvertswithneigbhorsupperbound}, the result follows. 
\end{proof}

\begin{prop}\label{C4adjneightbors}
If $G$ is a cactus graph on $n$ vertices containing a $C_4$ such that at least two adjacent vertices on some $C_4$ each have a neighbor not on $C_4$, then $Z(\overline{G})\leq n - 4$.
\end{prop}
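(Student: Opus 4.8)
The plan is to exhibit an explicit zero forcing set of size $n-4$ for $\overline{G}$, mirroring the constructions used in Theorem \ref{mega_n-4}(1) and (2a) for unicyclic graphs. Let $C_4$ be a four-cycle in $G$ on which at least two adjacent vertices each have a neighbor off the cycle; label the cycle $a,b,c,d$ consecutively with edges $ab$, $bc$, $cd$, $da$, and assume (after relabeling) that $a$ and $b$ are the two adjacent vertices with neighbors $\alpha$ and $\beta$, respectively, not on $C_4$. By Observation \ref{charofcactugps}, $\alpha \neq \beta$, these neighbors are not on $C_4$, and every vertex of $G$ other than $a,b,c,d$ is adjacent to at most one vertex of $C_4$; hence in $\overline{G}$ every such vertex is adjacent to at least three of $\{a,b,c,d\}$, while among $a,b,c,d$ themselves the only edges of $\overline{G}$ are $ac$ and $bd$.

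The natural candidate is $Z = V(\overline{G}) \setminus \{\alpha,\beta,c,d\}$, so the four white vertices are $\alpha$, $\beta$, $c$, $d$. I would then check a forcing sequence: in $\overline{G}$, vertex $a$ is nonadjacent to $\alpha$ (neighbor in $G$), nonadjacent to $b$ and $d$ (neighbors in $G$), and adjacent to $c$; so among the white vertices $a$ sees only $c$ — wait, $a$ is also nonadjacent to $\alpha$ in $\overline G$, so $\alpha$ is a white neighbor of $a$. Better to let the first force be performed by $b$: in $\overline{G}$, $b$ is nonadjacent to $\beta$, to $a$, and to $c$ (all neighbors in $G$), and adjacent to $d$; so among $\{\alpha,\beta,c,d\}$ the only white neighbor of $b$ is $d$ — unless $\alpha$ is nonadjacent to $b$ in $\overline G$ as well. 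Since $\alpha$ need not be adjacent to $b$ in $G$, $\alpha$ could be a second white neighbor of $b$. So the cleanest choice is to put $\alpha$ (or a vertex that $a$ can immediately force) into a position that is resolved first. The robust approach: let $Z = V(\overline G)\setminus\{\alpha,\beta,c,d\}$ and argue that (i) some vertex of $Z$ forces one of $c,d$ using that $a$ or $b$ has controlled nonadjacencies, or alternatively use a neighbor of $\alpha$ inside $Z$ to force $\alpha$ first — concretely, since $\alpha\notin C_4$, $\alpha$ has some neighbor in $G$, and either that neighbor lies in $Z$ and can be arranged to force $\alpha$, or $\alpha$ is a leaf attached to $a$. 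I would split on these small structural cases exactly as in the proof of Theorem \ref{mega_n-4}, keeping track of the fact that in $\overline G$ each white vertex off $C_4$ is joined to at least three of $a,b,c,d$ so it has few white neighbors, and that the $C_4$-vertices $c,d$ (each of which may or may not have off-cycle neighbors) have their $\overline G$-adjacencies to $a,b$ pinned down.

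After the first force clears one white vertex (say $\alpha$ via a neighbor in $Z$, or $d$ via $b$), the graph $\overline G$ has three white vertices among $\{\beta,c,d\}$ (or $\{\alpha,\beta,c\}$), and I would continue: a vertex of $H_y$-type (adjacent to $a,b,c$ but not $d$ — e.g.\ $\beta$ once it is blue, or $\alpha$ once blue, or a forest vertex hanging off $a$ or $b$) forces the remaining $C_4$-vertex, and then the last two white vertices, now each having a unique white neighbor, are forced in turn. The key point making length $4$ achievable rather than $3$ is that $\{\alpha,\beta\}$ are two distinct off-cycle vertices attached to \emph{adjacent} cycle vertices, so in $\overline G$ the pair $\{\alpha,\beta\}$ plus the pair $\{c,d\}$ can be "peeled off" one at a time; contrast the non-adjacent case (2b) of Theorem \ref{mega_n-4}, where this fails. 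Combined with Lemma \ref{cacti-lower}, which gives $\operatorname{Z}(\overline G)\ge n-4$, this would pin down $\operatorname{Z}(\overline G)=n-4$, but the proposition only claims the upper bound, so exhibiting the set and verifying the forcing chain suffices.

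The main obstacle I anticipate is case management in verifying that the first force actually occurs: depending on whether $c$ and $d$ themselves have off-cycle neighbors, and on the precise $G$-adjacency between $\alpha,\beta$ and the other cycle vertices, the "obvious" first forcer may have two white neighbors. I would handle this by choosing $Z$ adaptively — for instance, if $a$ has a leaf neighbor $\alpha$ in $G$ then $\alpha$ is adjacent in $\overline G$ to everything except $a$, so leaving $\alpha$ white means $\alpha$ is forced by any blue vertex once only $\alpha$ is white, i.e.\ $\alpha$ should be forced \emph{last}; so one would instead leave white a vertex that is easy to force first. Concretely I expect the argument to reduce to at most two or three sub-cases (parametrized by $\deg_G a$, $\deg_G b$ relative to the cycle, and whether $c,d$ have pendant structure), each dispatched by an explicit three-step forcing chain in $\overline G$ exactly as in Theorem \ref{mega_n-4}(1) and (2a).
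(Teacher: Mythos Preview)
Your overall strategy --- exhibit an explicit zero forcing set of size $n-4$ for $\overline{G}$ --- is exactly what the paper does. The gap is in the choice of the white set. With your set $W=\{\alpha,\beta,c,d\}$, no vertex is guaranteed to perform a first force: in $\overline{G}$, vertex $a$ sees both $\beta$ and $c$ white, vertex $b$ sees both $\alpha$ and $d$ white, and any other blue vertex $z$ is adjacent in $\overline{G}$ to at least one of $c,d$ and typically to at least one of $\alpha,\beta$ as well. You noticed this yourself and fell back on a promised case analysis that is never carried out; as written, the proposal is not a proof.

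The fix is a different (and case-free) choice of $W$. Instead of leaving the two \emph{non}-special cycle vertices $c,d$ white, leave white \emph{one of the special vertices and its diagonal}: take $W=\{\alpha,\beta,a,c\}$, so the blue cycle vertices are $b$ and $d$. Now $b$ is $G$-adjacent to $a,c,\beta$ and (by the cactus property you already invoked) not to $\alpha$, so in $\overline{G}$ its only white neighbour is $\alpha$; hence $b\to\alpha$. Similarly $d$ is $G$-adjacent to $a,c$ and not to $\beta$, so $d\to\beta$. Then $\alpha$ (now blue) is $G$-adjacent to $a$ and not to $c$, so $\alpha\to c$, and $a$ is forced last. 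No subcases are needed. This is precisely the paper's argument (in their labeling the white set is $\{a,b,v,y\}$ with $v,y$ diagonal on the $4$-cycle), and it is also the pattern actually used in Theorem~\ref{mega_n-4}(2a), where the white cycle vertices $b,d$ are diagonal, not adjacent. Your set $\{\alpha,\beta,c,d\}$ does not match either construction you cite.
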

\begin{proof}
Let $C_4$ be a cycle with vertices $v$, $x$, $w$, and $y$ with edges $vx$, $xy$, $yw$, and $wv$. By assumption assume that $v$ is also adjacent to a vertex $a$ (not on this cycle), and let $x$ be adjacent to a vertex $b$ (not on this cycle). It is sufficient to exhibit a zero forcing set of size $n-4$ for $\overline{G}$. Consider the set $Z = V\setminus \{a,b,v,y\}$. In $\overline{G}$ the only white neighbor of $x$ is $a$. Hence $x$ can force $a$ in $\overline{G}$. Next observe that the only white neighbor of $w$ in $\overline{G}$ is $b$, so $w$ can force $b$. Then it follows that the only white neighbor of $a$ (which is now blue) is $y$. Hence $a$ can force $y$. Finally, $y$ (now blue) can force the last white vertex $v$. Therefore $Z$ is a zero forcing set for $\overline{G}$ and hence $Z(\overline{G})\leq n - 4$.
\end{proof}

\begin{cor}
If $G$ is a cactus graph on $n$ vertices containing a $C_4$ such that at least two adjacent vertices on some $C_4$ each have a neighbor not on $C_4$, then $Z(\overline{G}) = n - 4$.
\end{cor}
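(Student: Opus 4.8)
The plan is to combine the two results immediately preceding this corollary, exactly as the companion corollaries in this section do. Proposition \ref{C4adjneightbors} gives the upper bound $Z(\overline{G}) \le n-4$ for any cactus graph $G$ on $n$ vertices containing a $C_4$ in which at least two adjacent vertices on some $C_4$ each have a neighbor off that $C_4$. For the matching lower bound, I would invoke Lemma \ref{cacti-lower}, which states that any cactus graph on $n \ge 4$ vertices satisfies $Z(\overline{G}) \ge n-4$ (this is simply Theorem \ref{thm:SubKrs} applied to the fact that a cactus cannot contain $K_{2,3}$ as a subgraph). Putting the two inequalities together forces equality.

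Concretely, the proof is one sentence: by Proposition \ref{C4adjneightbors} we have $Z(\overline{G}) \le n-4$, and by Lemma \ref{cacti-lower} we have $Z(\overline{G}) \ge n-4$, hence $Z(\overline{G}) = n-4$. The only thing to double-check is that the hypotheses of the corollary indeed imply the hypotheses of both cited results: the existence of a $C_4$ together with $n \ge 4$ (which is automatic since a $C_4$ already contributes four vertices, and the "at least two adjacent vertices each have a neighbor not on $C_4$" clause pushes $n$ to at least $6$) covers Lemma \ref{cacti-lower}, and the adjacency condition is verbatim the hypothesis of Proposition \ref{C4adjneightbors}.

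There is essentially no obstacle here — this corollary is purely a bookkeeping step sandwiching the parameter between a lower bound from the Ramsey-type Theorem \ref{thm:SubKrs} (via Lemma \ref{cacti-lower}) and an explicit zero forcing set construction (Proposition \ref{C4adjneightbors}). If anything, the only mild subtlety worth a remark is that the upper bound construction in Proposition \ref{C4adjneightbors} genuinely needs the two adjacent "decorated" vertices on the $C_4$, which is why the companion result (no two adjacent vertices on any $C_4$ have an external neighbor) lands at $n-3$ instead; I would not belabor this but could add a one-line remark pointing the reader to the contrast with the preceding corollary.

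\begin{proof}
By Proposition \ref{C4adjneightbors}, $Z(\overline{G}) \le n-4$. Since $G$ contains a $C_4$ and at least two adjacent vertices on some $C_4$ each have a neighbor not on $C_4$, we have $n \ge 6 \ge 4$, so Lemma \ref{cacti-lower} applies and gives $Z(\overline{G}) \ge n-4$. Therefore $Z(\overline{G}) = n-4$.
\end{proof}
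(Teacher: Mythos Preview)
Your proof is correct and is essentially identical to the paper's own proof, which simply reads ``By Proposition \ref{C4adjneightbors} and Lemma \ref{cacti-lower} the result follows.'' Your added remark verifying $n\ge 4$ is a harmless extra detail.
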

\begin{proof}
By Proposition \ref{C4adjneightbors} and Lemma \ref{cacti-lower} the result follows. 
\end{proof}

We close this section with a concluding summary statement concerning the zero forcing number of the complement of a cactus graph that follows from the results presented in this section.

\begin{thm}
    Suppose $G$ is a cactus graph on $n$ vertices with $n \geq 4$ and at least two cycles. Then
     $n-4 \leq \operatorname{Z}(\overline{G}) \leq n - 2$. Furthermore, 
     \begin{enumerate}
         \item $\operatorname{Z}(\overline{G}) = n - 2$ if and only if $G$ is one of the graphs in Figure \ref{fig:book}.
         \item $\operatorname{Z}(\overline{G}) = n - 4$ if and only if $G$ contains at 4-cycle and at least two adjacent vertices on some $C_4$ each have a neighbor not on $C_4$.
         \item $\operatorname{Z}(\overline{G}) = n - 3$, otherwise.
     \end{enumerate}
\end{thm}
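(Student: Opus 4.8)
The plan is to assemble this final theorem entirely from the machinery already developed in this section, so the proof is essentially a bookkeeping argument that organizes the preceding propositions and corollaries into the trichotomy. First I would establish the outer bounds $n-4 \leq \operatorname{Z}(\overline{G}) \leq n-2$: the lower bound $\operatorname{Z}(\overline{G}) \geq n-4$ is exactly Lemma \ref{cacti-lower}, and for the upper bound I would split on whether $G$ contains a $C_4$. If $G$ contains a $C_4$, then $\operatorname{Z}(\overline{G}) \leq n-3 \leq n-2$ by Proposition \ref{C4noadjvertswithneigbhorsupperbound}. If $G$ contains no $C_4$, then the preceding proposition on $C_4$-free cacti gives $\operatorname{Z}(\overline{G}) = n-3$ or $G$ is one of the graphs in Figure \ref{fig:book} with $\operatorname{Z}(\overline{G}) = n-2$; either way $\operatorname{Z}(\overline{G}) \leq n-2$.

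Next I would prove the three equivalences in turn. For item (1), the backward direction is the direct computation already noted (for $G$ a graph in Figure \ref{fig:book}, $\overline{G}$ has an isolated vertex plus a near-complete graph, so $\operatorname{Z}(\overline{G}) = n-2$). For the forward direction, suppose $\operatorname{Z}(\overline{G}) = n-2$. If $G$ contained a $C_4$ we would get $\operatorname{Z}(\overline{G}) \leq n-3$ by Proposition \ref{C4noadjvertswithneigbhorsupperbound}, a contradiction; so $G$ is $C_4$-free, and then the $C_4$-free cactus proposition forces $G$ to be one of the graphs in Figure \ref{fig:book}. For item (2), this is precisely the corollary to Proposition \ref{C4adjneightbors}: if $G$ contains a $C_4$ with two adjacent vertices each having an off-cycle neighbor, then $\operatorname{Z}(\overline{G}) = n-4$; conversely, if $G$ does not satisfy this condition, I would argue $\operatorname{Z}(\overline{G}) \geq n-3$ by cases — either $G$ is $C_4$-free, in which case Lemma \ref{cacti-lower} gives $\operatorname{Z}(\overline{G}) \geq n-3$, or every $C_4$ in $G$ fails to have two adjacent vertices with off-cycle neighbors, in which case Proposition \ref{C4noadjvertswithneigbhorslowerbound} gives $\operatorname{Z}(\overline{G}) \geq n-3$. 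Finally, item (3) follows by elimination: any cactus $G$ with at least two cycles that is neither a graph in Figure \ref{fig:book} nor contains a qualifying $C_4$ has $\operatorname{Z}(\overline{G})$ strictly between $n-4$ and $n-2$ by items (1) and (2), hence equal to $n-3$.

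The one place requiring a little care is the converse direction of item (2): I must make sure the condition ``$G$ contains a $C_4$ such that at least two adjacent vertices on \emph{some} $C_4$ each have a neighbor not on $C_4$'' is correctly negated. Its negation is ``for every $C_4$ in $G$ (if any), no two adjacent vertices on that $C_4$ both have off-cycle neighbors,'' which is exactly the hypothesis of Proposition \ref{C4noadjvertswithneigbhorslowerbound} when a $C_4$ is present, and is vacuous (reducing to the $C_4$-free case handled by Lemma \ref{cacti-lower}) otherwise. So the main — and only real — obstacle is ensuring the case split is exhaustive and the quantifiers match the hypotheses of the cited results exactly; there is no new combinatorial content to produce, only a clean logical reorganization.
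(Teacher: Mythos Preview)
Your proposal is correct and matches the paper's approach exactly: the paper presents this theorem as a ``concluding summary statement \ldots\ that follows from the results presented in this section'' and gives no separate proof, so the bookkeeping you describe---Lemma~\ref{cacti-lower} for the lower bound, Proposition~\ref{C4noadjvertswithneigbhorsupperbound} and the $C_4$-free cactus proposition for the upper bound, and the case split via Proposition~\ref{C4noadjvertswithneigbhorslowerbound} and the corollary to Proposition~\ref{C4adjneightbors} for the trichotomy---is precisely what the authors intend. Your attention to the quantifier in the negation of the item~(2) hypothesis is the one nontrivial point, and you handle it correctly.
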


\section{A Possible Future Direction}\label{sec:Future}

Of much interest to the community is the so-called  graph complement conjecture for minimum rank and, equivalently, maximum nullity \cite{AIM}. For a given graph $G$ on $n$ vertices, we let $S(G)$ denote the set of all $n\times n$ real symmetric matrices $A=[a_{ij}]$, where for $i\neq j$, $a_{ij}\neq 0$ if and only if vertex $i$ is adjacent to vertex $j$. For a graph $G$ on $n$ vertices, we let $mr(G)$ denote the smallest possible rank over all matrices in $S(G)$, and we let $M(G) = n-mr(G)$ be the largest possible nullity over all matrices in $S(G)$. The graph complement conjecture is presented below.

\begin{conj}
For any graph $G$, $mr(G) + mr(\overline{G}) \le |G| + 2$ and equivalently, $M(G) + M(\overline{G})\ge |G|-2$.
\end{conj}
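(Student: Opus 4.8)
This statement is the \emph{Graph Complement Conjecture} (GCC), a well-known open problem posed in \cite{AIM}, so I do not expect a complete resolution; the plan below is a strategy together with a frank account of where it stalls. The first observation is that the zero forcing machinery developed in this paper cannot settle the conjecture on its own: since $M(G) \le \operatorname{Z}(G)$ for every graph, the computed values of $\operatorname{Z}(\overline{G})$ only yield $M(G) + M(\overline{G}) \le \operatorname{Z}(G) + \operatorname{Z}(\overline{G})$, which is the wrong direction for a \emph{lower} bound on $M(G) + M(\overline{G})$. Any proof must therefore control the maximum nullity (equivalently $mr$) directly rather than through $\operatorname{Z}$.

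A natural first reduction is to the case where both $G$ and $\overline{G}$ are connected. If $G$ is disconnected, write $G = G_1 \du G_2$; then $mr$ is additive over components, so $M(G) = M(G_1) + M(G_2)$, while $\overline{G} = \overline{G_1} \vee \overline{G_2}$ is a join. Joins tend to have small minimum rank (large nullity), so the step here is to prove a lower bound on $M$ of a join strong enough to absorb the deficit $n - 2 - M(G)$; the small cases (the $G_i$ complete, edgeless, etc.) already indicate that $M(\overline{G})$ is large precisely when $G$ splits into few, dense pieces. With the disconnected case dispatched, one would attack the both-connected case by producing, for a fixed $G$, a pair of matrices $A \in S(G)$ and $B \in S(\overline{G})$ of simultaneously small rank. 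Because $A + B$ necessarily has the full (complete-graph) off-diagonal pattern, the goal is to choose $A, B$ so that $\operatorname{rank}(A) + \operatorname{rank}(B) \le n + 2$; an orthogonal-representation formulation, assigning vectors to vertices so that adjacency corresponds to (non)orthogonality in complementary subspaces, is the natural language for such a construction.

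As corroborating evidence, the conjecture can be verified for every family treated in this paper. For a non-star tree $T$ on $n \ge 4$ vertices, Proposition \ref{treesandstars} gives $\operatorname{Z}(\overline{T}) = n - 3$, and \cite{AIM} shows $M(\overline{T}) = \operatorname{Z}(\overline{T}) = n - 3$; since every tree on at least two vertices has $M(T) \ge 1$, we obtain $M(T) + M(\overline{T}) \ge n - 2$, with the star and disconnected-complement cases immediate from $M(K_m) = m - 1$. For the unicyclic and cactus families the relevant complements are dense with $\operatorname{Z}(\overline{G}) \in \{n-2, n-3, n-4\}$; here I would use the classical fact that a connected graph containing a cycle satisfies $M(G) \ge 2$ (connected graphs with $mr = n-1$ are exactly the paths), so that once one confirms $M(\overline{G}) = \operatorname{Z}(\overline{G})$ in these high-nullity regimes, the sum $M(G) + M(\overline{G}) \ge 2 + (n - 4) = n - 2$ follows even in the extremal $n - 4$ case.

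The main obstacle is the step I stated most glibly: minimum rank is badly behaved under complementation, and there is no known device that controls $mr(G)$ and $mr(\overline{G})$ simultaneously for an arbitrary graph. The join lower bound and the orthogonal-representation construction are each plausible in isolation, but neither has been pushed through in general, which is exactly why the GCC has remained open since \cite{AIM}. A realistic contribution, rather than a full resolution, is to establish the conjecture for the bounded-treewidth (partial $k$-tree) complements flagged in Section \ref{sec:Future}, where the structural decompositions that drive the zero forcing arguments here might also be leveraged to bound $M$ from below on both $G$ and $\overline{G}$.
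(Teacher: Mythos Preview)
Your assessment is correct: this statement is presented in the paper as a \emph{Conjecture} (the Graph Complement Conjecture from \cite{AIM}), not as a result the paper proves. The paper offers no proof and instead cites it as motivation for future work on partial $k$-trees in Section~\ref{sec:Future}, so there is no paper proof to compare your proposal against. Your recognition that this is an open problem, and that the zero forcing upper bounds on $M$ developed here point in the wrong direction for the needed inequality, is accurate.
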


Recent work on partial $k$-trees and their complements (\cite{CharacterizationKtrees,MinRankOfPartialKtrees,ComplementPartialKtrees})  has made progress on the graph complement conjecture much more feasible since every graph $G$ can be viewed as a partial $k$-tree for some $k$. While a partial $k$-tree is constructed by starting with a $k$-tree and deleting edges, the complement involves adding edges so the underlying structure is never lost. The authors in \cite{MinRankOfPartialKtrees} proved the following important result.

\begin{thm}\label{thm:MinrankPartialKtree}
If $G$ is a partial $k$-tree, then $M_+(\overline{G}) \ge |G| - k - 2$. In particular, $mr_+(\overline{G}) = 
k + 2$, where $mr_{+}(G)$ is defined to be the smallest possible rank over all positive semidefinite matrices in $S(G)$ and $M_{+}(G)=n-mr_{+}(G)$.
\end{thm} 

Note that this result is stated for positive semi-definite zero forcing and an analogous result is similarly attained for standard zero forcing.  Therefore, with this work towards proving the graph complement conjecture, it is worthwhile to continue the study of the zero forcing number of the complement of a graph by studying it for partial $k$-trees. 

A first step in this study of partial $k$-trees would be to study $k$-trees. In \cite{CharacterizationKtrees}, a characterization of $k$-trees was given that would be a useful starting point.

\begin{thm}
Let $G$ be a graph with at least $k+1$ vertices. Then, $G$ is a $k$-tree if and only if the following are true:
\begin{enumerate}
    \item $G$ has no $K_{k+2}-$minor
    \item $G$ does not contain any chordless cycle of length at least $4$.
    \item $G$ is $k$-connected. 
\end{enumerate}
\end{thm}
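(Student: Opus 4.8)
The key observation is that condition (2) says exactly that $G$ is \emph{chordal}, and that for a chordal graph a $K_{k+2}$-minor exists precisely when $K_{k+2}$ occurs as a subgraph. The plan is therefore to first replace condition (1) by the cleaner condition $\omega(G)\le k+1$ (clique number at most $k+1$), and then prove each implication by an induction that peels off a simplicial vertex. To justify the replacement: a $K_{k+2}$ subgraph is in particular a $K_{k+2}$-minor, so (1) trivially forces $\omega(G)\le k+1$; for the converse I would use that a chordal graph satisfies $\tw(G)=\omega(G)-1$, so $\omega(G)\le k+1$ gives $\tw(G)\le k$, and since treewidth is minor-monotone with $\tw(K_{k+2})=k+1$, no $K_{k+2}$-minor can occur. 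Hence, under (2), condition (1) is equivalent to $\omega(G)\le k+1$.

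For the forward direction, I would induct along the recursive construction of a $k$-tree from $K_{k+1}$ by successively attaching a vertex to the vertices of a $k$-clique. At each step the new vertex $v$ is simplicial (its neighborhood is the $k$-clique to which it was attached), so reversing the construction yields a perfect elimination ordering and $G$ is chordal, giving (2). The closed neighborhood $N[v]$ has size $k+1$, so no clique of size $k+2$ is ever created and $\omega(G)=k+1$, giving (1) via the equivalence above (or directly: the clique tree whose bags are the $(k+1)$-cliques is a width-$k$ tree decomposition, so $\tw(G)=k$ and $K_{k+2}$ is not a minor). For (3), $K_{k+1}$ is $k$-connected, and when $v$ is attached to a $k$-clique $Q$, deleting any set $S$ of $k-1$ vertices leaves $G-S$ connected by induction while $v$ remains adjacent to at least one vertex of $Q\setminus S$ (as $|Q|=k>k-1$); hence $k$-connectivity is preserved.

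For the reverse direction, I would induct on $n=|V(G)|$. In the base case $n=k+1$, $k$-connectivity forces minimum degree at least $k$ on $k+1$ vertices, so $G=K_{k+1}$, the smallest $k$-tree. For $n>k+1$, chordality supplies a simplicial vertex $v$; its closed neighborhood is a clique of size $\deg(v)+1\le\omega(G)\le k+1$, so $\deg(v)\le k$, while $k$-connectivity forces $\deg(v)\ge k$, whence $\deg(v)=k$ and $N(v)$ is a $k$-clique. I would then delete $v$ and observe that $G-v$ is chordal and $K_{k+2}$-minor-free as an induced subgraph and has $n-1\ge k+1$ vertices, so by the induction hypothesis $G-v$ is a $k$-tree; re-attaching $v$ to the $k$-clique $N(v)$ exhibits $G$ as a $k$-tree.

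The main obstacle is verifying that $G-v$ remains $k$-connected, since deleting a vertex from a $k$-connected graph generically drops the connectivity to $k-1$. Here simpliciality is used crucially: for any $S$ with $|S|=k-1$, the graph $G-S$ is connected, and in $G-S$ the vertex $v$ is still simplicial with $N(v)\setminus S\ne\emptyset$ (since $|N(v)|=k>k-1$). A non-isolated simplicial vertex is never a cut vertex, because its neighbors form a clique and every other vertex in its component reaches that clique without passing through $v$; hence $(G-S)-v=(G-v)-S$ is connected. As $S$ was an arbitrary set of size $k-1$ and $|V(G-v)|\ge k+1$, this gives $k$-connectivity of $G-v$ and closes the induction.
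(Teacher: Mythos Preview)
The paper does not prove this theorem; it is quoted verbatim from \cite{CharacterizationKtrees} (Zeng--Yin) in the ``Possible Future Direction'' section as background, with no accompanying argument. So there is no in-paper proof to compare your attempt against.

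For what it is worth, your proposed proof is sound and follows the standard line one finds in the literature. The reduction of condition (1) to $\omega(G)\le k+1$ via $\tw(G)=\omega(G)-1$ for chordal $G$ is clean, and the simplicial-vertex induction for the converse is exactly the right mechanism. The one step that deserves a word more care is the forward-direction $k$-connectivity: when you delete a set $S$ of size $k-1$ from the enlarged graph you should explicitly split into the cases $v\in S$ and $v\notin S$ (both are immediate, but as written you only handle the second). The key lemma you isolate---that a non-isolated simplicial vertex is never a cut vertex, hence $G-v$ stays $k$-connected---is precisely what makes the induction close, and your justification (any $u$--$v$--$u'$ segment of a path can be shortcut through the clique $N(v)$) is correct.
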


Starting with $2$-tree's, one would explore graphs $G$ such that $G$ has no $K_4$-minor, $G$ does not contain any chordless cycle of length at least 4, and $G$ is $2$-connected. Therefore, towards bounding the zero forcing number of $2$-tree's, a tangible direction that is similar to our result in Theorem \ref{thm:SubKrs} where we are able to bound the zero forcing below for graphs with no $K_{n,m}$ subgraph is be to bound the zero forcing number of the complement of graphs with no $K_4$ minors.




\section*{Acknowledgements}
We thank Michael Young for very helpful discussions and insights which lead to the formulation of Theorem \ref{thm:SubKrs} and some of the resulting consequences.

This work started at the MRC workshop “Finding Needles in Haystacks: Approaches to Inverse
Problems using Combinatorics and Linear Algebra”, which took place in June 2021 with support from the
National Science Foundation and the American Mathematical Society. The authors are grateful to the
organizers of this meeting. In particular, this material is based upon work supported by the National Science Foundation under Grant Number DMS 1916439

Shaun M. Fallat was also supported in part by an NSERC Discovery Research Grant, Application No.: RGPIN--2019--03934.


\end{document}